\definecolor{DarkBlue}{rgb}{0.2,0.2,0.4}
\def\l@subsection{\@tocline{2}{0pt}{2.5pc}{5pc}{}}
\newcommand{\R}{\mathbb{R}}
\newcommand{\C}{\mathbb{C}}
\newcommand{\GG}{\mathbb{G}}
\DeclareMathOperator{\Ad}{Ad}
\DeclareMathOperator{\ad}{ad}
\DeclareMathOperator{\rank}{rank}
\DeclareMathOperator{\Id}{Id}
\DeclareMathOperator{\sign}{sign}
\numberwithin{equation}{subsection}
\theoremstyle{plain}
\newtheorem*{theorem*}{Theorem}
\newtheorem*{lemma*}{Lemma}
\newtheorem{theorem}[equation]{Theorem}
\newtheorem{lemma}[equation]{Lemma}
\newtheorem{proposition}[equation]{Proposition}
\theoremstyle{definition}
\newtheorem{definition}[equation]{Definition}
\newtheorem*{definition*}{Definition}
\newtheorem*{notation*}{Notation}
\newtheorem{remark}[equation]{Remark}
\newtheorem*{remark*}{Remark}
\title[Higher Orbital Integrals on Motion Groups]{Higher orbital integrals on motion groups and Mackey deformation}
\author{Angel Rom\'an}
\address{\parbox{\textwidth}{\raggedright Department of Mathematics, Washington University, St. Louis, MO, 63130. angelr@wustl.edu}}
\author{Yanli Song}
\address{\parbox{\textwidth}{\raggedright Department of Mathematics, Washington University, St. Louis, MO, 63130. yanlisong@wustl.edu}}
\author{Xiang Tang}
\address{\parbox{\textwidth}{\raggedright
Department of Mathematics, Washington University, St. Louis, MO, 63130. xtang@wustl.edu}}
\date{\today}
\begin{document}

\begin{abstract}
We present an explicit construction of cyclic cocycles on Cartan motion groups, which can be viewed as generalizations of orbital integrals. We show that the higher orbital integral on a real reductive group associated with a semisimple element converges to the corresponding one on the associated Cartan motion group. 
\end{abstract}

\maketitle

\tableofcontents

\section{Introduction}
Mackey \cite{Mackey76} suggested that one might study unitary representations of a real reductive Lie group through representations of the associated Cartan motion group. This Mackey analogy between real reductive groups and the associated Cartan motion groups is deeply connected with the Connes-Kasparov conjecture, \cite{BCH, Higson_Mackey}. Recently notable success in the study of the Mackey analogy has led to a new proof of the Connes-Kasparov isomorphism theorem, \cite{AfgoustidisConnesKasparov}.

Let $G$ be a connected real reductive linear Lie group, and $K$ a maximal compact subgroup. Suppose that $\mathfrak{g}$ and $\mathfrak{k}$ are the corresponding Lie algebras. Consider the Cartan involution $\theta$ on $\mathfrak{g}$ such that $\mathfrak{k}$ is the eigenspace of $\theta$ associated with the eigenvalue 1. Let $\mathfrak{p}$ be the corresponding eigenspace associated with eigenvalue $-1$. The adjoint representation of $K$ on $\mathfrak{g}$ keeps $\mathfrak{k}$ invariant and therefore acts on $\mathfrak{p}$.  View $\mathfrak{p}$ as a vector group. The Cartan motion group $G_0$ is defined to be the semidirect $K\ltimes \mathfrak{p}$. When $G$ is $SL(3, \mathbb{R})$, $K$ is $SO(3)$ and $\mathfrak{p}$ can be identified with $\mathbb{R}^3$ which is equipped with the canonical $SO(3)$ action on $\mathbb{R}^3$. The Cartan motion group $G_0$ in this case is the rigid motion group of the Euclidean space $\mathbb{R}^3$. 

As a semidirect, the group $G_0$ is well studied in literature, c.f. \cite{Mackey76}. For example, the group $C^*$-algebra $C^*_r(G_0)$ is isomorphic to the crossed product algebra $C_0(\mathfrak{p}^*)\rtimes K$, whose structure is much simpler than the (reduced) group $C^*$-algebra $C^*_r(G)$ of the reductive group $G$, \cite{CCH2, CCH1, HigsonRoman20}. Connes' tangent groupoid construction \cite{connes}, which is also called deformation to normal cone, introduces a groupoid $\mathbb{G}$ a smooth family $\{G_t\}_{t\geq 0}$ of Lie groups. When $t=0$, $G_0$ is the Cartan motion group; when $t>0$, $G_t$ is isomorphic to $G$. The groupoid $\mathbb{G}$ offers a concrete and geometric link between $G_0$ and $G$, and realizes the Mackey analogy elegantly, c.f. \cite{ AfgoustidisConnesKasparov, AfgoustidisMackeyBijection, Higson_Mackey}. 

Inspired by its success in the study of $K$-theory, the last two authors of this article \cite{song2022cartan} started the exploration of the cyclic theory of $C_c^\infty(G)$ using $\mathbb{G}$. Traces on an algebra are degree 0 cyclic cocycles. It is shown \cite[Theorem 3.3]{song2022cartan} that for $x\in K$, traces on $C^\infty_c(\mathbb{G})$ defined by the orbital integral  associated with $x$ in $G_t$ converges to the corresponding orbital integral on $G_0$. This is an encouraging success to use Cartan motion group and Connes' tangent groupoid to study the cyclic theory of a reductive group. However, orbital integrals are not always sufficient to detect all $K$-theory information of $C^*_r(G)$. In \cite{Hochs-Wang}, it is observed that when $G$ does not have equal rank, orbital integrals  vanish on $K(C^*_r(G))$. To solve this defect, the last two authors \cite{ST2019higher} introduced higher degree cyclic cocycles, called higher orbital integrals, on the Harish-Chandra Schwartz subalgebra $\mathcal{C}(G_t)$ that generalize the classical orbital integrals. We recall the definition of the higher orbital integral below. 

Let $P=MAN$ the Iwasawa decomposition of a (cuspidal) parabolic subgroup. Then for any element $g\in G$, we can write $g=\kappa(g)\mu(g)e^{H(g)}\mathbf{n}(g)$ under the decomposition $G = KMAN$, where $H$ is a map from $G$ to $\mathfrak{a}$ (see \cite[Section~VII]{KnappRepTheorySemisimpleGroups}, \cite[Section~VIII]{KnappBeyond}). 
\begin{definition}(\cite[Definition~3.3]{ST2019higher})\label{dfn:highorbint}
    For any $f_0,f_1,\ldots,f_n\in \mathcal{C}(G)$ and for any regular element $x\in M$, the \emph{higher orbital integral} is defined as 
    \begin{align}
\label{definition-higher-orbital-regular}        \Phi_{P,x}(&f_0,f_1,\ldots,f_n):=  \int_{h\in M/Z_M(x)} \int_{KN} \int_{G^n} f_0(khxh^{-1}nk^{-1}g_n^{-1}\cdots g_1^{-1})\\
   \notag &f_1(g_1)\cdots f_n(g_n) \Biggl[\sum_{\sigma\in S_n} \sign(\sigma) H_{\sigma(1)}(g_1\cdots g_n k)\cdots H_{\sigma(n)}(g_n k)\Biggr]\\
   \notag & \qquad dg_1\cdots dg_n\; dk\;dn\;dh.
    \end{align}
\end{definition}

In this article, we study the property of $\Phi_{P,x}$ under the family $G_t$. For every $t>0$, we consider the cyclic cocycle $\Phi_{P_t,x,t}$ defined by a regular element $x\in T$ in $M_t$. We obtain the following main theorem. 

\begin{theorem}[Theorem \ref{thm:limit}] Let $x\in T$ be a regular\footnote{Actually we will work with a slightly stronger assumption. We will consider regular elements such that the centralizer coincides with a Cartan subgroup. Such regular elements are quite common. It is almost but not quite true that the centralizer of a regular element is Cartan as is remarked by Knapp \cite[Section~VII.8]{KnappBeyond}.} element in $M_t$ for all $t\neq 0$. The following limit holds 
    $$
    \lim_{t\rightarrow 0} \Phi_{P_t,x,t}(f_{0,t},f_{1,t},\ldots,f_{n,t})=\det\nolimits_{\mathfrak{p}}(\Ad_x-\Id)^{-1} J_\phi^{-1}\tau_{\mathfrak{a},x}(f_{0,0},f_{1,0},\ldots,f_{n,0}),
    $$
where $J_\phi = 2^{-\frac{\dim N}{2}}$ is the Jacobian of the map $\phi: \mathfrak{g}\to \mathfrak{p}$ introduced in (\ref{eq:phi}).
\end{theorem}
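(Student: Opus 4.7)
The plan is to follow the strategy used for the classical (degree $0$) orbital integral in \cite[Theorem~3.3]{song2022cartan}, but with the additional analytical care required by the higher-degree factor $H_{\sigma(1)}(\cdots)\cdots H_{\sigma(n)}(\cdots)$ appearing in (\ref{definition-higher-orbital-regular}). The overall scheme is to rewrite $\Phi_{P_t,x,t}$ as an integral over fixed (that is, $t$-independent) manifolds by passing to tangent-groupoid coordinates, to expand each $t$-dependent ingredient in a Taylor expansion about $t=0$, and to conclude by dominated convergence.

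The first step is to parametrize elements of $G_t$, $N_t$, $M_t$ via the deformed exponential $(k,Y)\mapsto k\exp(tY)$, so that the integrals against $dg_i$, $dn$, $dh$ become, after rescaling by appropriate powers of $t$, integrals over fixed copies of $K\times\mathfrak{p}$, $\mathfrak{n}$, and $M/Z_M(x)$. In these coordinates the family $f_{i,t}$ converges (in the Harish-Chandra Schwartz sense) to $f_{i,0}$ viewed on $G_0$, which is exactly the smoothness condition built into $C^\infty_c(\mathbb{G})$. The map $\phi:\mathfrak{g}\to\mathfrak{p}$ of (\ref{eq:phi}) enters here: it identifies the $\mathfrak{n}$-integration variable with its image in $\mathfrak{p}$ and produces the Jacobian factor $J_\phi^{-1}=2^{\dim N/2}$.

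Next, I would compute the asymptotics of the three building blocks of the integrand. The Iwasawa cocycle on $G_t$ satisfies $t^{-1}H_t(k\exp(tY))\to \overline{H}(Y)$ with $\overline{H}:\mathfrak{p}\to\mathfrak{a}$ the linear Iwasawa projection associated with $G_0$, so the $n$-fold product contributes a factor of $t^n$ that exactly cancels the $t^{-n}$ generated by rescaling the $n$ copies of the $G_t$-Haar measure. The conjugation $h_t x h_t^{-1}$ for $h_t=\exp(tZ)$ expands as $x\exp\bigl(t(\Ad_x-\Id)Z+O(t^2)\bigr)$, so a change of variables on $M_t/Z_{M_t}(x)$ generates the Jacobian $\det\nolimits_{\mathfrak{p}}(\Ad_x-\Id)^{-1}$ in the limit, using the hypothesis that the centralizer of $x$ is a full Cartan subgroup. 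Assembling these limits together with the $S_n$-antisymmetrization one recognizes precisely the motion-group cocycle $\tau_{\mathfrak{a},x}(f_{0,0},\ldots,f_{n,0})$.

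The principal obstacle is justifying the exchange of $\lim_{t\to 0}$ with the iterated integrals. The integrand is supported on a non-compact region and $H_t$ grows polynomially, so a $t$-uniform integrable majorant must be constructed by combining Harish-Chandra Schwartz estimates for $f_{i,t}$, uniform for $t$ in a compact neighborhood of $0$ (as guaranteed by the tangent-groupoid data), with uniform polynomial growth bounds for $H_t$ in the $KM_tA_tN_t$ decomposition. A secondary technical point is to verify that $t^{-1}H_t\to \overline{H}$ in a mode sufficiently strong to pass to the limit inside the $n$-fold product; this reduces to a direct calculation in the $\mathfrak{g}=\mathfrak{k}\oplus\mathfrak{a}\oplus\mathfrak{n}$ decomposition after dilation by $t$.
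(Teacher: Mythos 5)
Your overall plan — rewrite $\Phi_{P_t,x,t}$ in the $(k,X)\mapsto k\exp(tX)$ coordinates of the tangent groupoid, take the limit of each building block, and reassemble — is indeed the paper's strategy. However, there are two places where your bookkeeping does not match what actually happens, and one place where you are worrying about a difficulty that is absent here.

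First, your power-of-$t$ accounting is off. You claim the $n$-fold product of Iwasawa cocycles contributes a $t^n$ that ``exactly cancels the $t^{-n}$ generated by rescaling the $n$ copies of the $G_t$-Haar measure.'' This is not how the cocycle $\Phi_{P_t,x,t}$ is set up. The paper defines the deformed Iwasawa component by $H_t(g)=t^{-1}H(g)$, so that $H_{j,t}(k\exp(tZ))\to H_{j,0}(Z)$ directly (Lemma~\ref{H-function-limit-lemma}); the $t^{-1}$ is built in precisely because $H(k\exp(tZ))$ vanishes to first order as $t\to 0$. Separately, the scaled Haar measure $d_tg=|t|^{-\dim\mathfrak{p}}\,dg$ is converted by the Cartan-coordinate change of variables into $J(tX)\,dk\,dX$, with $J(tX)\to 1$ (Lemma~\ref{lemma-jacobian-limit}). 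There is no $t^{-n}$ from the measures and no cancellation against the $H$-factors; the two renormalizations are independent and each produces a finite nonzero limit on its own. If you tried to build $\Phi_{P_t,x,t}$ with the unnormalized $H$ and argued by cancellation against a measure factor, you would find the powers of $t$ do not match (the measure rescaling contributes $t^{\pm\dim\mathfrak{p}}$, not $t^{\pm 1}$, per copy of $G_t$).

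Second, the key step you gloss over is how the $h$-integral over $M_t/Z_{M_t}(x)$ is handled. Your Taylor-expansion heuristic $h x h^{-1}=x\exp\bigl(t(\operatorname{Ad}_x-\operatorname{Id})Z+O(t^2)\bigr)$ points in the right direction, but the paper makes this rigorous by invoking the degree-zero limit theorem \cite[Theorem~3.3]{song2022cartan}: the $M_t$-integral $\int_{M_t}F(hxh^{-1},t)\,d_th$ converges to $\int_{K_M}\int_{\mathfrak{p}_M}F(uxu^{-1},\operatorname{Ad}_{uxu^{-1}}v-v,0)\,du\,dv$, and only after this does the substitution $v'=\operatorname{Ad}_{uxu^{-1}}v-v$ produce the Jacobian factor. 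Note also that the determinant that comes out is over $\mathfrak{p}_M$ (the non-compact part of $\mathfrak{m}$), as in the paper's Theorem~\ref{thm:limit}, not over all of $\mathfrak{p}$; the version in the introduction is loose on this point and you have inherited that looseness. Without calling out the appeal to the degree-zero theorem, your argument for the factor $\det_{\mathfrak{p}_M}(\operatorname{Ad}_x-\operatorname{Id})^{-1}$ is incomplete.

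Finally, the dominated-convergence concern you raise at length is moot here: the $f_j$ are taken in $C^\infty_c(\mathbb{G})$, so the integrand has compact support uniformly in $t$ once the supports are fixed, and no Harish-Chandra Schwartz estimates are needed to pass to the limit inside the integral. You have identified the $\phi$-map and its Jacobian $J_\phi^{-1}$ correctly, and Lemma~\ref{Lie-algebra-complement} is what lets $\mathfrak{p}_M\oplus\phi(\mathfrak{n})$ be recognized as $\mathfrak{a}^\perp$, which you would also need to make explicit.
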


In the above theorem, $\tau_{\mathfrak{a},x}$ is an analog of the $\Phi_{P,x}$ on the Cartan motion group. Let $A\subset \mathfrak{p}$ be a subspace with $\dim A=n$. Naturally, we can write $\mathfrak{p}=A\oplus A^\perp$. In the case of a Cartan motion group $G_0=K\ltimes \mathfrak{p}$, the group $M_K$ is just $Z_K(\mathfrak{a})$, where $\mathfrak{a}$ is a subspace of a maximal abelian Lie subalgebra $\mathfrak{a}_s$ of $\mathfrak{p}$. Note then that $Z_{G_0}(\mathfrak{a})=M_K\times\mathfrak{a}$. Then we have an ``Iwasawa" decomposition of the Cartan motion group
$$
G_0=(KM_K)\ltimes (\mathfrak{a}\oplus\mathfrak{a}^\perp),$$
where $\mathfrak{a}^\perp$ is the orthogonal complement of $\mathfrak{a}$ in $\mathfrak{p}$. We may think of $M_K\times \mathfrak{a}$ as analogous to the group $M$ in the Iwasawa decomposition of the reductive group $G=KMAN$. Generalizing Definition \ref{dfn:highorbint}, we obtain the following cyclic cocycle on $G_0$. 

\begin{definition}(Definition \ref{regular-general-rank-definition})
Let $f_0,f_1,\ldots,f_n\in C^\infty_c(G_0)$. For any $\sigma \in S_n$, we define 
\begin{align}
\notag &\tau_{A,x,\sigma}(f_0, f_1, \dots, f_n)\\
\label{regular-higher-orbital-general-rank} \colon =&\int_{h\in M_K/Z_{M_K}(x)} \int_{u \in K} \int_{w \in A^\perp}\int_{G_0^n} \\
& \notag f_0\left(uhxh^{-1}u^{-1}k_n^{-1}\cdots k_1^{-1},k_1\ldots k_n uw -\sum_{j=1}^n \left(\prod_{l=1}^{j} k_l\right)v_j\right) \\
\notag &\prod_{j=1}^n f_j(k_j, v_j) \cdot  \prod_{j=1}^n H_{\sigma(j)}\left(u^{-1} \left(\prod^n_{l=j+1} k_l\right)^{-1} v_j\right),
\end{align}
thus, we define the \emph{higher orbital integral with respect to $x\in M$} as
\begin{equation}
%\label{regular-higher-orbital-integral-sum}
\tau_{A,x}(f_0,f_1,\ldots,f_n)= \sum_{\sigma\in S_n} \sign(\sigma) \tau_{A,x,\sigma}(f_0,f_1,\ldots,f_n).
\end{equation}
\end{definition}

By the Fourier transform, $\mathcal{C}(G_0)$ is isomorphic to $\mathcal{C}(\mathfrak{p}^*)\rtimes K$. As the index pairing between the $K$-theory and cyclic cohomology on $G_t$ is continuous with respect to $t$, Theorem \ref{thm:limit} and the cyclic cocycle $\tau_{\mathfrak{a},x}$ on $\mathcal{C}(\mathfrak{p}^*)\rtimes K$ suggest that we might be able to compute the pairings on $G_t$ for $t>0$ by the pairing on $G_0$, whose geometry information is easier to get hold of, and obtain a better understanding of \cite[Theorem II]{ST2019higher}. We plan to study this question in the near future. 

We would like to remark that the cyclic (co)homology of the crossed product algebra $\mathcal{C}(\mathfrak{p}^*)\rtimes K$ is well studied in literature \cite{Brylinski:Grenoble, Nistor, PflaumPosthumaTang:20}. The main contribution of this article is the construction of explicit formulas of cyclic cocycles. A better understanding of these new cocycles will lead to interesting new formulas in equivariant index theory. 

Our paper is organized as follows. In Section \ref{sec:prelim}, we review the background information needed to read this article. In particular, we include a brief introduction to cyclic cohomology, higher orbital integrals on a reductive group, the groupoid $\mathbb{G}$ together with some integral formulas. The introduction of the higher orbital integrals on a Cartan motion group is presented  in Section \ref{section-higher-orbital-integral-motion-group}. And we prove the main theorem about the limit of higher orbital integrals in Section \ref{section-deformation-higher-orbital-integral}. \\

\noindent{\bf Acknowledgments}: We would like to thank 
Alexander Afgoustidis, Pierre Clare, Axel Gastaldi, Satwata Hans, Nigel Higson, and Hang Wang for inspiring discussions. Roman's research was partially supported by the NSF grant DMS-2213097. Song's research was partially supported by the NSF grant  DMS-1952557. Tang's research was partially supported by the NSF grants DMS-1952551, DMS-2350181, and Simons Foundation grant MPS-TSM-00007714. The authors acknowledge support of the Institut Henri Poincar\'e (UAR 839 CNRS-Sorbonne Universit\'e), and LabEx CARMIN (ANR-10-LABX-59-01).

\begin{notation*} We shall use the following notations: $\prod_{l=j}^n k_l=k_j\cdots k_n$. Furthermore, we shall take the convention that $\prod_{l=n+1}^n k_l=e$, the identity element.
\end{notation*}

\section{Preliminaries}\label{sec:prelim}

In this section, we briefly introduce the background information about cyclic cohomology and orbital integrals. 
\subsection{Cyclic Cohomology}

We will define the cyclic cohomology for an algebra.

Let $A$ be a Fr\'echet algebra over $\C$. The \emph{space $C^n(A)$ of Hoschild cochains of degree $n$} is the space of all bounded complex-valued $(n+1)$-linear functionals on $A$. 
The \emph{Hoschild codifferential} is the map $b: C^n(A)\rightarrow C^{n+1}$ defined by
\begin{align*}
    b\Phi\;(a_0, a_1,\cdots , a_{n+1})=&\sum_{k=0}^n (-1)^k\Phi(a_0,\cdots, a_k\cdot a_{k+1},\cdots, a_{n+1})\\
    &\qquad+ (-1)^{n+1}\Phi(a_{n+1}\cdot a_0,\cdots, a_n).
\end{align*}
The \emph{Hoschild cohomology} $HH^*(A)$ is the cohomology of the complex $(C^*(A),b)$.

We say that a cochain $\Phi\in C^n(A)$ is \emph{cyclic} if it satisfies
$$
\Phi(a_n,a_0,\ldots,a_{n-1})=(-1)^n\Phi(a_0,a_1,\cdots,a_n)$$
for all $a_0,a_1,\ldots,a_n\in A$. The subspace $C^*_\lambda(A)$ of cyclic cochains is closed under the Hoschild codifferential $b$. Thus the \emph{cyclic cohomology} $HC^*(A)$ is the cohomology of the subcomplex $(C^*_\lambda(A),b)$.

\subsection{Measures and Integral Formulas}

Throughout this paper, we will let $G$ be a (connected) real reductive group and $K$ a maximal compact subgroup of $G$. In Section~\ref{section-higher-orbital-integral-motion-group}, let $V$ be a (finite-dimensional) Euclidean space such that $K$ acts isometrically on it. The \emph{motion group} is defined as $G_0:=K\ltimes V$ with the product law 
\[
(k_0, v_0) \cdot (k_1, v_1) \colon = (k_0k_1, k_1^{-1}v_0 +  v_1).
\]
In Section~\ref{section-deformation-higher-orbital-integral}, there is a motion group associated to the reductive group $G$ called the \emph{Cartan motion group}. Let $\mathfrak{g}=\mathfrak{k}\oplus\mathfrak{p}$ be the Lie algebra of $G$ with the Cartan decomposition. Then the associated Cartan motion group is $G_0:=K\ltimes \mathfrak{p}$ with the product law
\[
(k_0, v_0) \cdot (k_1, v_1) \colon = (k_0k_1, \Ad_{k_1^{-1}}v_0 +  v_1).
\]

Let $T$ be the Cartan subgroup of $K$. We define a Haar measure $dk$ on $K$ such that $\int_T e \;dk=1$. On $V$, we let $dv$ be the standard Euclidean measure. Then the measure on $G_0$ is $dk\;dv$. That is
$$
\int_{G_0} f(g_0)\;dg_0=\int_K\int_V f(k,v)\;dk\;dv.$$

Let $A$ be a subspace of $V$ and let $A^\perp$ be the orthogonal complement of $A$ in $V$; that is, $V=A\oplus A^\perp$. Then
$$
\int_Vf(v)\;dv=\int_A \int_{A^\perp} f(a+a')\;da\;da'.$$

Finally, if $x\in T$, the orbital integral has the property that %\AR{Does this need an explanation? Answer: No}
$$
\int_{K/T} f(kxk^{-1})\;d[k]=\int_Kf(kxk^{-1})\;d[k],
$$
as the volume of $T$ is assumed to be 1.

We recall the \emph{(global) Cartan decomposition}. 
\begin{theorem}
    The mapping $K\times \mathfrak{p}\rightarrow G$ defined by $(k,X)\mapsto k\exp(X)$ is a diffeomorphism.
\end{theorem}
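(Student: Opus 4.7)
The plan is to reduce the claim to matrix polar decomposition. Since $G$ is a linear real reductive group, one can fix a faithful representation $G\hookrightarrow GL(n,\R)$ whose image is closed and stable under the transpose-inverse involution $g\mapsto (g^T)^{-1}$, chosen so that this ambient involution restricts to $\theta$ on $G$; equivalently, $\mathfrak{p}$ becomes the intersection of $\mathfrak{g}$ with the symmetric matrices and $\mathfrak{k}$ the intersection with the skew-symmetric matrices. Granting such an embedding, the theorem reduces to three claims: $\exp$ is a diffeomorphism from $\mathfrak{p}$ onto its image $P := \exp(\mathfrak{p}) \subset G$; the multiplication map $K\times P\to G$, $(k,p)\mapsto kp$, is a bijection; and the differential of $(k,X)\mapsto k\exp(X)$ is invertible everywhere.

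First I would handle the exponential on $\mathfrak{p}$. Inside $GL(n,\R)$, $\exp$ is a diffeomorphism from the symmetric matrices onto the positive-definite symmetric matrices, with smooth inverse given by the principal logarithm (spectral theorem). Intersecting with $G$ and using that $\mathfrak{p}$ is cut out linearly yields that $\exp\colon\mathfrak{p}\to P$ is a diffeomorphism, where $P = G\cap\{\text{positive symmetric matrices}\}$.

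Next I would prove bijectivity of $(k,X)\mapsto k\exp(X)$. For surjectivity, given $g\in G$, form $s := g\,\theta(g)^{-1}=gg^T$, which is positive-definite symmetric and lies in $G$, so by the previous step $s=\exp(2X)$ for a unique $X\in\mathfrak{p}$. Setting $p=\exp(X)$ and $k := gp^{-1}$, a direct computation using $\theta(p)=p^{-1}$ shows $\theta(k)=k$, so $k\in K=G^\theta$ and $g=k\exp(X)$. For injectivity, the identity $k\exp(X)\cdot\theta(k\exp(X))^{-1}=\exp(2X)$ shows $X$ is determined by $g$, and then so is $k=g\exp(-X)$.

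Finally I would verify the differential is invertible. At $(e,0)$ it is the isomorphism $(\xi,Y)\mapsto\xi+Y$ coming from the direct sum decomposition $\mathfrak{g}=\mathfrak{k}\oplus\mathfrak{p}$; at a general point $(k_0,X_0)$ the map factors through left translation by $k_0$ and $d\exp_{X_0}$, and the latter is invertible because $\operatorname{ad}(X_0)$ is self-adjoint with respect to an invariant bilinear form on $\mathfrak{g}$, so $\sinh(\operatorname{ad} X_0/2)/(\operatorname{ad} X_0/2)$ has no zero eigenvalues. I expect the main obstacle to be the preliminary step of producing a linear embedding in which $\theta$ becomes transpose-inverse: for a general (possibly non-semisimple) reductive $G$ this requires a careful Mostow-type construction on the center, and one also needs to verify that $g\theta(g)^{-1}$ always lands in $P$ and not merely in the positive symmetric matrices of the ambient $GL(n,\R)$.
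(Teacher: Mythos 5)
The paper does not actually prove this statement; it refers to Knapp \cite[Theorem~6.31]{KnappBeyond} for the semisimple case and remarks in a footnote that for Knapp's class of reductive groups the global Cartan decomposition is \emph{part of the definition} of ``reductive'' --- which is precisely how the non-semisimple-center issue you flag at the end is sidestepped. Your outline is the standard polar-decomposition argument underlying that reference, so the method matches the one the paper is implicitly invoking, and you correctly isolate the two places where the real work is (the linear embedding making $\theta$ into transpose-inverse, and the fact that $\exp$ carries $\mathfrak{p}$ onto the positive symmetric elements \emph{of $G$} rather than just of $GL(n,\R)$).

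There is, however, a concrete slip in the polar decomposition step: you form $s := g\,\theta(g)^{-1} = gg^{T}$ and claim $s = \exp(2X)$ when $g = k\exp(X)$, but for $k$ orthogonal and $\exp(X)$ symmetric one has $gg^{T} = k\exp(2X)k^{-1}$, which is only conjugate to $\exp(2X)$. With your $s$, the element $k := g\,s^{-1/2}$ is not orthogonal in general, since $kk^{T} = g\,s^{-1}g^{T} = g(g^{T})^{-1}g^{-1}g^{T}$. The formula $g\,\theta(g)^{-1}$ is the one adapted to the opposite-order factorization $g = \exp(X)\,k$. Since the theorem asks for $g = k\exp(X)$, you should instead set $s := \theta(g)^{-1}g = g^{T}g$; then $g = k\exp(X)$ gives $g^{T}g = \exp(X)k^{-1}k\exp(X) = \exp(2X)$, and $k := g\exp(-X) = g\,s^{-1/2}$ satisfies $kk^{T} = g\,s^{-1}g^{T} = g(g^{T}g)^{-1}g^{T} = e$, as required. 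The same substitution fixes the identity you invoke for injectivity.
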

We refer the reader to \cite[Theorem~6.31]{KnappBeyond} for a proof in the case of semi-simple groups\footnote{In \cite{KnappBeyond}, the global Cartan decomposition is part of the \emph{definition} of a reductive group.}. We describe the integral formula under this decomposition.
\begin{proposition}
    Up to a multiplicative constant the following holds 
    \begin{equation}\label{integral-Cartan-decompositon} \int_G f(g)\;dg=\int_K\int_\mathfrak{p} f(k\exp(X))\;J(X)\;dk\;dX
    \end{equation}
    where $J(X)$ is the absolute value of the determinant of the map $\mathfrak{p}\rightarrow \mathfrak{p}$ defined by the operator
    $$\frac{\sinh(\ad_X)}{\ad_X}.$$
\end{proposition}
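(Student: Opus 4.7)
The approach is to directly compute the Jacobian of the Cartan decomposition map $\Psi \colon K \times \mathfrak{p} \to G$, $(k, X) \mapsto k\exp(X)$, relative to the product measure $dk\,dX$ on the left and the Haar measure on $G$. Since both measures are invariant under the left translation $(k, X) \mapsto (k_0 k, X)$, the Jacobian function depends only on $X$, and it suffices to compute $|\det d\Psi_{(e, X)}|$ for each $X \in \mathfrak{p}$.

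Next, identifying $T_{\exp X} G$ with $\mathfrak{g}$ via the left trivialization $L_{(\exp X)^{-1} *}$, one verifies directly that
\[
d\Psi_{(e, X)}(Y, Z) = e^{-\ad X}(Y) + \frac{1 - e^{-\ad X}}{\ad X}(Z), \qquad Y \in \mathfrak{k}, \; Z \in \mathfrak{p}.
\]
The first term is $\Ad_{\exp(-X)}(Y)$, obtained by commuting a variation $\exp(tY)$ past the fixed element $\exp X$, while the second term is the classical formula for the differential of the exponential map.

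The main step, and the one I expect to require the most care, is the computation of $\det d\Psi_{(e, X)}$ with respect to $\mathfrak{g} = \mathfrak{k} \oplus \mathfrak{p}$. I would use the identity $e^{-\ad X}(Y) = Y - \ad X \cdot \frac{1 - e^{-\ad X}}{\ad X}(Y)$ to rewrite the map as
\[
(Y, Z) \mapsto Y + \frac{1 - e^{-\ad X}}{\ad X}\bigl(Z - \ad X(Y)\bigr),
\]
and then apply the shear change of variables $Z' = Z - \ad X(Y) \in \mathfrak{p}$, which has unit Jacobian and is well-defined because $[\mathfrak{p}, \mathfrak{k}] \subset \mathfrak{p}$. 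In the new variables the map is $(Y, Z') \mapsto Y + \frac{1 - e^{-\ad X}}{\ad X}(Z')$. Because $X \in \mathfrak{p}$, even powers of $\ad X$ preserve $\mathfrak{k}$ and $\mathfrak{p}$ while odd powers exchange them, so the $\mathfrak{p}$-component of $\frac{1 - e^{-\ad X}}{\ad X}(Z')$ for $Z' \in \mathfrak{p}$ is exactly $\frac{\sinh(\ad X)}{\ad X}(Z')$. The resulting matrix is thus block upper-triangular with identity on the $\mathfrak{k}$-block, and its determinant equals $\det_\mathfrak{p} \frac{\sinh(\ad X)}{\ad X}$, proving the formula. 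The main potential obstacle is organizing the $\mathfrak{k}$-$\mathfrak{p}$ mixing in $d\Psi_{(e, X)}$; without the shear trick the Jacobian becomes a genuinely four-block determinant and the cancellations giving the clean answer $\det_\mathfrak{p}(\sinh(\ad X)/\ad X)$ are obscured.
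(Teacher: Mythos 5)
Your proof is correct. The paper itself does not prove this proposition---it simply cites \cite[Lemma 2.5.6]{Higson:lecturenotes}---so there is no in-paper argument to compare against; your derivation (left-trivialize $d\Psi_{(e,X)}$, peel off the unipotent shear $Z' = Z - \ad_X(Y)$, and use that even/odd powers of $\ad_X$ preserve/swap $\mathfrak{k}$ and $\mathfrak{p}$ to reduce to a block-upper-triangular determinant with $\Id$ on the $\mathfrak{k}$-block and $\sinh(\ad_X)/\ad_X$ on the $\mathfrak{p}$-block) is the standard computation that the cited reference carries out.
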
 
This result is well-known, but we refer the reader to \cite[Lemma 2.5.6]{Higson:lecturenotes} for a proof. From now on, we normalize the measures and choose a multiplicative constant so that \eqref{integral-Cartan-decompositon} actually holds.

The reductive group $G$ has another decomposition, the $KAK$ decomposition. We follow the convention established in \cite[Proposition~5.28]{KnappRepTheorySemisimpleGroups}) in the following. 
\begin{proposition}
    The following identity holds
    \begin{align}\label{integral-KAK-decomposition}
    \int_G f(g)\;dg=&\int_K\int_{\mathfrak{a}^+}\int_K f(k_1\exp(H)k_2)\\
   \notag &\qquad \prod_{\alpha\in \Delta^+(\mathfrak{\mathfrak{g},\mathfrak{a})}} \sinh(\alpha(H))^{\dim\mathfrak{g}_\alpha}\;dk_1\;dH\;dk_2, \end{align}
    where $\Delta^+(\mathfrak{g},\mathfrak{a})$ is the set of positive restricted roots of the $\mathfrak{a}$-action on $\mathfrak{g}$, $\mathfrak{a}^+$ is corresponding positive Weyl chamber, and $\mathfrak{g}_\alpha$ is the root space of the root $\alpha$.
\end{proposition}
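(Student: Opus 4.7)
The plan is to reduce the KAK formula to the Cartan (KP) formula~\eqref{integral-Cartan-decompositon} via a polar decomposition of $\mathfrak{p}$ under the adjoint action of $K$. Let $M = Z_K(\mathfrak{a})$. Up to a set of measure zero, every element of $\mathfrak{p}$ can be written uniquely as $\Ad_{k'} H$ with $k' \in K/M$ and $H \in \mathfrak{a}^+$, giving a diffeomorphism $K/M \times \mathfrak{a}^+ \to \mathfrak{p}^{\mathrm{reg}}$. Starting from \eqref{integral-Cartan-decompositon}, I would substitute $X = \Ad_{k'} H$, use the identity $\exp(\Ad_{k'} H) = k' \exp(H) (k')^{-1}$, and use the $K$-invariance $J(\Ad_{k'} H) = J(H)$ (which holds because $\ad_{\Ad_{k'} H} = \Ad_{k'} \circ \ad_H \circ \Ad_{k'^{-1}}$ has the same spectrum as $\ad_H$). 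This converts \eqref{integral-Cartan-decompositon} into an integral over $K \times K/M \times \mathfrak{a}^+$ of $f(k k' \exp(H) (k')^{-1})$ weighted by $J(H)$ times the Jacobian of the polar decomposition.

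The polar Jacobian is computed by differentiating $(k', H) \mapsto \Ad_{k'} H$ at $(e, H)$ and writing $\mathfrak{p} = \mathfrak{a} \oplus \bigoplus_{\alpha \in \Delta^+} \mathfrak{p}_\alpha$ with $\mathfrak{p}_\alpha = \mathfrak{p} \cap (\mathfrak{g}_\alpha + \mathfrak{g}_{-\alpha})$, and similarly $\mathfrak{m}^\perp = \bigoplus_{\alpha \in \Delta^+} \mathfrak{k}_\alpha$ with $\mathfrak{k}_\alpha = \mathfrak{k} \cap (\mathfrak{g}_\alpha + \mathfrak{g}_{-\alpha})$. Since $\theta$ interchanges $\mathfrak{g}_\alpha$ with $\mathfrak{g}_{-\alpha}$, a direct computation shows that $\ad_H \colon \mathfrak{k}_\alpha \to \mathfrak{p}_\alpha$ is multiplication by $\alpha(H)$ under the natural identification, so its determinant is $\alpha(H)^{\dim \mathfrak{g}_\alpha}$. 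Collecting over $\alpha$ and combining with the identity on $\mathfrak{a}$, the polar Jacobian is $\prod_{\alpha \in \Delta^+(\mathfrak{g}, \mathfrak{a})} \alpha(H)^{\dim \mathfrak{g}_\alpha}$. Making the substitutions $k_1 = kk'$ and $k_2 = (k')^{-1}$ unfolds the $K \times K/M$ integration into an integration over $K \times K$, with the resulting constant factor $\mathrm{vol}(M)$ absorbed into the normalization of measures.

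Finally, I would evaluate $J(H)$ for $H \in \mathfrak{a}$: since $\ad_H^2$ acts on $\mathfrak{p}_\alpha$ by the scalar $\alpha(H)^2$, and $\sinh(\ad_H)/\ad_H$ is an even analytic function of $\ad_H$, it acts on $\mathfrak{p}_\alpha$ as the scalar $\sinh(\alpha(H))/\alpha(H)$, and as the identity on $\mathfrak{a}$. Hence
$$
J(H) = \prod_{\alpha \in \Delta^+} \left(\frac{\sinh(\alpha(H))}{\alpha(H)}\right)^{\dim \mathfrak{g}_\alpha},
$$
and multiplying by the polar Jacobian cancels the denominators $\alpha(H)^{\dim \mathfrak{g}_\alpha}$, leaving exactly $\prod_{\alpha \in \Delta^+} \sinh(\alpha(H))^{\dim \mathfrak{g}_\alpha}$. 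The main obstacle I anticipate is the careful accounting of normalizations, in particular ensuring that the $M$-orbit factor in the polar decomposition is correctly absorbed so that both $k_1$ and $k_2$ range over all of $K$ in the final formula; but since the proposition is stated up to an overall multiplicative constant that can be absorbed into the chosen Haar measures (consistent with Knapp's convention), this is primarily a bookkeeping issue rather than a conceptual one.
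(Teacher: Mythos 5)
The paper does not actually prove this proposition; it defers entirely to Wallach \cite[Lemma~2.4.2]{Wallach1}. So your proposal cannot be compared with an in-paper argument, but it can be assessed on its own merits, and it is essentially correct. Your route is the natural one given the structure of this section: you take the $K\exp(\mathfrak{p})$ integral formula \eqref{integral-Cartan-decompositon}, which the paper has already normalized to hold exactly, and feed in the $K$-equivariant polar decomposition $K/M \times \mathfrak{a}^+ \to \mathfrak{p}^{\mathrm{reg}}$, $(k',H)\mapsto \Ad_{k'}H$, with $M=Z_K(\mathfrak{a})$. The three computational ingredients all check out: $\Ad_K$-invariance of $J$ follows from $\ad_{\Ad_{k'}H}=\Ad_{k'}\ad_H\Ad_{k'}^{-1}$; the polar Jacobian is $\prod_{\alpha\in\Delta^+}\alpha(H)^{\dim\mathfrak{g}_\alpha}$ because $\ad_H$ carries $\mathfrak{k}_\alpha=\{X+\theta X: X\in\mathfrak{g}_\alpha\}$ to $\mathfrak{p}_\alpha=\{X-\theta X: X\in\mathfrak{g}_\alpha\}$ by scalar multiplication by $\alpha(H)$ under the obvious identification; and $J(H)=\prod_{\alpha\in\Delta^+}\bigl(\sinh\alpha(H)/\alpha(H)\bigr)^{\dim\mathfrak{g}_\alpha}$ because $\ad_H^2$ acts on $\mathfrak{p}_\alpha$ by $\alpha(H)^2$ and $\sinh(\ad_H)/\ad_H$ is a power series in $\ad_H^2$. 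The unfolding at the end also works: $f(kk'\exp(H)(k')^{-1})$ genuinely descends to $K\times(K/M)$ since $m\in M$ commutes with $\exp(H)$, so lifting $\int_{K/M}$ to $\frac{1}{\mathrm{vol}(M)}\int_K$ and substituting $k_1=kk'$, $k_2=(k')^{-1}$ gives the stated formula up to a constant.

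Two small points worth tightening if you write this up in full. First, you should say explicitly that $\alpha(H)>0$ on $\mathfrak{a}^+$ for $\alpha\in\Delta^+$, so the absolute values in the Jacobian can be dropped and the cancellation against the denominator of $J(H)$ is exact rather than up to sign. Second, the constant issue is slightly more delicate than you suggest: the paper has already fixed the constant in \eqref{integral-Cartan-decompositon}, so the constant in the $KAK$ formula is then determined, namely $1/\mathrm{vol}(M)$ times a factor from the choice of Riemannian identification of $T_{[e]}(K/M)$ with $\mathfrak{m}^\perp\subset\mathfrak{k}$; it cannot be independently absorbed. This matches the convention in \cite[Proposition~5.28]{KnappRepTheorySemisimpleGroups}, which the paper says it follows and which does carry such a constant, so the statement as written in the paper is implicitly suppressing it. Your acknowledgment that this is bookkeeping is fair, but the phrasing ``absorbed into the normalization'' is not quite right once \eqref{integral-Cartan-decompositon} has already been pinned down.
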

See \cite[Lemma~2.4.2]{Wallach1} for a proof.

Let $H$ be a Cartan subgroup of $G$. We can normalize measures on $H$ and on $G/H$ so that 
$$
\int_G f(g)\;dg=\int_{G/H}\left[\int_H f(gh)\;dh\right]\;d(gH).$$
Let $x\in G$ be a regular element such that the centralizer $Z_G(x)$ is a Cartan subgroup (see Remark~\ref{remark-on-regular-elements}). The orbital integral $\Lambda_x(f)$, formally defined by
\begin{equation}\label{orbital-integral}
\Lambda_x(f)=\int_{G/Z_G(x)} f(gxg^{-1})\;d[g],
\end{equation}
is well-defined whenever $f\in C^\infty_c(G)$. See Appendix~A in \cite{ST2019higher} (in particular, Proposition~A.4) for the case of Harish-Chandra's Schwartz functions. We also refer the reader to \cite[Theorem~6]{HarishChandraDiscreteSeries2}.

let $P=MAN$ be a cuspidal parabolic subgroup. Then we have the Iwasawa decomposition $G=KMAN$. The integral formula under this decomposition is
\begin{equation}\label{integral-Iwasawa-decomposition}
\int_G f(g)\;dg=\int_K\int_M\int_A\int_N f(kman)e^{2\rho(\log a)}\;dk\;dm\;da\;dn,
\end{equation}
where $\rho$ is the half-sum of positive roots.

Next, we list two more integral formulas. Let $\mathfrak{a}$ and $\mathfrak{n}$ be the corresponding Lie algebras of $A$ and $N$, respectively. That is, $\exp[\mathfrak{a}]=A$ and $\exp[\mathfrak{n}]=N$. Then
\begin{align}
\label{integral-group-to-algebra-A}    \int_A f(a)\;da & = \int_{\mathfrak{a}}f(\exp(X))\;dX,\\
\label{integral-group-to-algebra-N}    \int_N f(n)\;dn & = \int_{\mathfrak{n}} f(\exp(Y))\;dY.
\end{align}

Note that $\mathfrak{g}=\mathfrak{k}\oplus\mathfrak{p}$ and $\mathfrak{g}=\mathfrak{k}\oplus\mathfrak{a}_s\oplus\mathfrak{n}_s$. The subscript $s$ in the previous decomposition signifies maximally split. We need a way to relate the Cartan decomposition and the Iwasawa decomposition. Let $\phi:\mathfrak{g}\rightarrow \mathfrak{p}$ be defined by
\begin{equation}\label{eq:phi}
\phi(X)=\frac{1}{2}(X-\theta X)
\end{equation}
where $\theta$ is the Cartan involution. We note that $\phi\vert_{\mathfrak{a}_s\oplus\mathfrak{n}_s}$ gives us an isomorphism between $\mathfrak{a}_s\oplus\mathfrak{n}_s$ and $\mathfrak{p}$. In particular, if we let $\mathfrak{a}_s^\perp$ be the orthogonal complement of $\mathfrak{a}_s$ in $\mathfrak{p}$, then we can see that $\mathfrak{a}_s^\perp$ is isomorphic to $\mathfrak{n}_s$. The Jacobian of $\phi$, which we will denote by $J_\phi$, is constant. We obtain the following integral formula
\begin{equation}\label{integral-Lie-algebra-diffeo}
\int_{\mathfrak{n}} (f\circ \phi)(Y)\;dY=\int_{\phi[\mathfrak{n}]} f(w)\;J_\phi^{-1}\;dw.
\end{equation}

\subsection{Higher Orbital Integrals for Reductive Groups}
\label{subsection-review-orbital-integral-reductive}

In this subsection, we review the definition and the main theorems of the higher orbital integral for reductive group constructed by Song and Tang in \cite{ST2019higher}. Note that in that paper, they worked with the algebra of Harish-Chandra's Schwartz function $\mathcal{C}(G)$. Since $C^\infty_c(G)\subset \mathcal{C}(G)$, thus all results apply in this paper.

Let $G=KMAN$ be a reductive group together with its Iwasawa decomposition. Then for any element $g\in G$, we can write $g=\kappa(g)\mu(g)e^{H(g)}\mathbf{n}(g)$ under the decomposition. We write $P=MAN$, the (cuspidal) parabolic subgroup.
\begin{definition}(\cite[Definition~3.3]{ST2019higher})
    For any $f_0,f_1,\ldots,f_n\in \mathcal{C}(G)$ and for any regular element $x\in M$, the \emph{higher orbital integral} is defined as
    \begin{align}
%\label{definition-higher-orbital-regular}      
\Phi_{P,x}(f_0,f_1,\ldots,f_n):= & \int_{h\in M/Z_M(x)} \int_{KN} \int_{G^n} f_0(khxh^{-1}nk^{-1}g_n^{-1}\cdots g_1^{-1})\\
   \notag &f_1(g_1)\cdots f_n(g_n) \Biggl[\sum_{\sigma\in S_n} \sign(\sigma) H_{\sigma(1)}(g_1\cdots g_n k)\cdots H_{\sigma(n)}(g_n k)\Biggr]\\
   \notag & \qquad dg_1\cdots dg_n\; dk\;dn\;dh.
    \end{align}
\end{definition}
In Appendix~A of \cite{ST2019higher}, the authors show that the integral converges.

\begin{theorem}(\cite[Theorem~3.5]{ST2019higher})
For all regular element $x\in M$, the higher orbital integral $\Phi_{P,x}$ is a cyclic cocycle and defined an element
$$
[\Phi_{P,x}]\in HC^n(\mathcal{C}(G)).$$
\end{theorem}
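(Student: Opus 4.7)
The plan is to verify cyclicity and the Hochschild identity $b\Phi_{P,x}=0$ by isolating the combinatorial kernel in Definition~\ref{dfn:highorbint} from the integration over $M/Z_M(x)\times KN$. Set
$$
c(g,k) := H(gk) \in \mathfrak{a}.
$$
Uniqueness of the Iwasawa decomposition $G=KMAN$ yields the 1-cocycle identity
$$
c(g_1 g_2,k) \;=\; c\bigl(g_1,\kappa(g_2k)\bigr) + c(g_2,k),
$$
and, after fixing an orthonormal basis of $\mathfrak{a}$, the antisymmetrized sum in the integrand is the determinant
$$
\omega_n(g_1,\ldots,g_n;k) \;=\; \det\bigl(H_i(g_j\cdots g_n k)\bigr)_{i,j=1}^n,
$$
i.e.\ the top component of $c(g_1\cdots g_n,k)\wedge\cdots\wedge c(g_n,k)\in\Lambda^n\mathfrak{a}$. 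A direct computation then shows that $\omega_n$ satisfies the cocycle relation of a wedge of $n$ copies of $c$, where the simplicial differential absorbs the $\kappa$-shift in the $K$-variable.

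For the cyclic property I would apply a cyclic substitution $g_i\mapsto g_{i-1}$ and absorb the resulting shift in the argument of $f_0$ into the invariant integrations over $K$, $N$ and $M/Z_M(x)$. The variable now appearing in the first slot is renamed, and the 1-cocycle identity for $c$ rewrites each $H$-argument in the determinant modulo a $\kappa$-twist in $k$ that is immediately reabsorbed by $K$-invariance of the outer $dk$-integration. The sign $(-1)^n$ appears from the antisymmetry of the determinant under the cyclic shift of its columns.

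The Hochschild identity is the substantive step. For $0<j<n+1$, the substitution $g_j':=g_jg_{j+1}$ in $\Phi_{P,x}(f_0,\ldots,f_jf_{j+1},\ldots,f_{n+1})$ collapses two consecutive columns of $\omega_{n+1}$. The cocycle identity
$$
c(g_jg_{j+1}\cdots g_{n+1},k)=c\bigl(g_j,\kappa(g_{j+1}\cdots g_{n+1}k)\bigr)+c(g_{j+1}\cdots g_{n+1},k)
$$
expands this into a sum of determinants whose internal terms cancel pairwise, leaving a telescoping structure. The endpoint terms $j=0$ and $j=n+1$ are reconciled using the cyclic property just established, with the product $f_{n+1}f_0$ being absorbed into the $f_0$-slot by the same change of variables; the leftover boundary contributions match the ends of the telescope, yielding $b\Phi_{P,x}=0$.

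The main obstacle is the bookkeeping of the $\kappa$-shifts: every substitution in the $g_i$-variables alters the $K$-argument inside each $H$ by a factor $\kappa(\cdot)$, and one must repeatedly use $K$-invariance of $dk$ together with row operations on the determinant to reabsorb these shifts. Convergence is automatic for $f_i\in C_c^\infty(G)$ since the Iwasawa logarithm $H$ grows only polynomially; for the Harish-Chandra Schwartz class $\mathcal{C}(G)$ it is the content of Appendix~A of \cite{ST2019higher}, which I would invoke without reproducing.
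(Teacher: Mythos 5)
A preliminary remark: the paper does not actually prove this statement---it is quoted as \cite[Theorem~3.5]{ST2019higher}, with convergence delegated to Appendix~A of that reference. What the paper does prove, in Section~\ref{section-higher-orbital-integral-motion-group}, is the motion-group analogue (that $\tau_{A,x}$ is a cyclic cocycle), and your outline is essentially the same strategy transported to the reductive case: your identification of the antisymmetrized kernel with $\det\bigl(H_i(g_j\cdots g_nk)\bigr)_{i,j}$ and your use of the Iwasawa cocycle identity $H(g_1g_2k)=H\bigl(g_1\kappa(g_2k)\bigr)+H(g_2k)$ play exactly the role that linearity of the maps $H_j$ plays in the paper's computation: by column operations the cumulative columns $H(g_j\cdots g_nk)$ become the twisted single-variable columns $H\bigl(g_j\kappa(g_{j+1}\cdots g_nk)\bigr)$, each interior Hochschild term splits into a sum of two ``omitted-column'' determinants, and the alternating sum telescopes against the two end terms, just as in the paper's verification of \eqref{regular-cocycle}. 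So the plan is sound and follows the same route as the cited proof and the paper's own parallel argument.

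Two points need tightening. First, the sign in the cyclicity step: a cyclic shift of the $n$ columns of the determinant contributes $(-1)^{n-1}$, not $(-1)^n$; the missing $-1$ must come from the wrapped variable, via $0=H(ek)=H\bigl(g^{-1}\kappa(gk)\bigr)+H(gk)$, i.e.\ $H\bigl(g^{-1}\kappa(gk)\bigr)=-H(gk)$, together with the fact that all but one of the resulting terms vanish because they repeat a column (this is precisely how the paper gets $(-1)^n$ in its Subsection on the cyclic condition, where the minus sign enters through $H_{\sigma(1)}\bigl(-u'^{-1}\sum_j(\cdots)^{-1}v_j'\bigr)$). As literally stated, your justification of $(-1)^n$ is off by a sign. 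Relatedly, the wrapped term requires absorbing a conjugation by $hxh^{-1}$ into the $N$-integration, using that $x\in M$ normalizes $N$ and preserves its Haar measure---the reductive counterpart of Lemma~\ref{lemma-orthogonal-complement-is-invariant} and the substitution $w'=hx^{-1}h^{-1}v''+w-v''$ in the paper; your phrase ``absorbed by invariance'' should be made to say this. Second, to obtain a class in $HC^n(\mathcal{C}(G))$ you need not just convergence but continuity (boundedness) of the $(n+1)$-linear functional on the Fr\'echet algebra $\mathcal{C}(G)$; polynomial growth of $H$ suffices for $C_c^\infty(G)$, but for $\mathcal{C}(G)$ this is exactly what Appendix~A of \cite{ST2019higher} provides, so your citation should be invoked for boundedness, not merely for convergence of each integral.
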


\iffalse We can also define the higher orbital integral over the \emph{trivial} element. Let $e\in M$ be the identity (note that $e$ is not regular in $M$).
\begin{definition}
    For any $f_0,f_1,\ldots,f_n\in \mathcal{C}(G)$, the \emph{higher orbital integral over the trivial element} is defined as
    \begin{align}
\label{definition-higher-orbital-trivial}        \Phi_e(f_0,f_1,\ldots,f_n):= &  \int_{KN} \int_{G^n} f_0(knk^{-1}g_n^{-1}\cdots g_1^{-1})f_1(g_1)\cdots f_n(g_n)\\ \notag & \Biggl[\sum_{\sigma\in S_n} \sign(\sigma) H_{\sigma(1)}(g_1\cdots g_n k)\cdots H_{\sigma(n)}(g_n k)\Biggr]\\
\notag &\qquad dg_1\cdots dg_n\; dk\;dn.
    \end{align}
\end{definition}

\begin{proposition}
    The higher orbital integral over the trivial element $\Phi_e$ is a cyclic cocycle. \qed
\end{proposition}
\fi

 When $P$ is the maximal cuspidal parabolic subgroup of $G$ and $x$ runs through all regular elements, the cocycles $\{\Phi_{P,x}\}$ generate the (periodic) cyclic cohomology of $\mathcal{C}(G)$. We refer the readers to \cite{ST2019higher} for more detailed discussion and the computation of the pairing between $[\Phi_{P,x}]$ and $K$-theory of $C^*_r(G)$.

\subsection{Deformation to the Normal Cone}\label{subsection-deformation}

There are many references for the definition of the deformation to the normal cone in terms of smooth manifolds. See, for example, \cite{HigsonRoman20}. We will follow more closely to that in \cite{song2022cartan} and use the definition relevant to this paper.

The \emph{deformation to the normal cone} of a reductive group $G$ over a maximal compact subgroup $K$ is the set
$$
\GG:=(G_0\times \{0\})\;\sqcup (G\times \R^\times),$$
where $G_0=K\ltimes \mathfrak{p}$ is the associated Cartan motion group, together with a unique smooth structure such that the map $K\times \mathfrak{p}\times\R\rightarrow \GG$ defined by
$$
(k,X,t)\mapsto \begin{cases} (k\exp(tX),t) & t\neq 0\\
(k,X,0) & t=0\end{cases}$$
is a diffeomorphism. Thus, a function $f$ on $\GG$ is smooth if and only if there is a smooth function $\tilde{f}$ on $K\times \mathfrak{p}\times \R$ such that
$$\tilde{f}(k,X,t)=\begin{cases} f(k\exp(tX),t) & t\neq 0\\ f(k,X,0) & t=0.\end{cases}$$

Let $G_t$ be a fiber $\GG$ over $t\in \R$, that is, $G_t=\{(k\exp(tX),t) : k\in K, X\in \mathfrak{p}\}$. Note that we have $\GG=\sqcup_{t\in \R} G_t$. If we fix a Haar measure $dg$ for $G=G_1$, then, for $t\neq 0$, we define a Haar measure $d_tg$ on $G_t$ as $d_tg=|t|^{-\dim\mathfrak{p}}\;dg$. We will define the measure on $G_0$ as $dk\;dX$, as usual.

\begin{proposition}
    The family of Haar measure $d_tg$ can be given by the formula 
$$    \int_{G_t} f(g,t)\;d_tg = \int_K\int_\mathfrak{p} f(k\exp(tX),t)J(tX)\;dk\;dX. $$
\qed
\end{proposition}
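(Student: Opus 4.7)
The plan is to prove this proposition by a direct computation that chains together two ingredients already available: the scaling definition of $d_t g$ and the Cartan decomposition integral formula stated in \eqref{integral-Cartan-decompositon}. For $t \neq 0$, the fiber $G_t$ is the Lie group $G$ (under the obvious identification $(g,t) \leftrightarrow g$), and $d_t g = |t|^{-\dim \mathfrak{p}}\,dg$ by definition. So first I would rewrite
$$\int_{G_t} f(g,t)\,d_t g \;=\; |t|^{-\dim \mathfrak{p}} \int_G f(g,t)\,dg,$$
treating $f(\cdot,t)$ as a function on $G$.

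Next I would apply the Cartan decomposition integral formula \eqref{integral-Cartan-decompositon} to parametrize elements of $G$ as $k\exp(Y)$ with $k \in K$ and $Y \in \mathfrak{p}$, obtaining
$$|t|^{-\dim \mathfrak{p}} \int_K \int_{\mathfrak{p}} f\bigl(k\exp(Y),t\bigr)\,J(Y)\,dk\,dY.$$
Finally, I would perform the linear change of variables $Y = tX$ in the vector space $\mathfrak{p}$. This dilation has Jacobian $|t|^{\dim \mathfrak{p}}$, which cancels the prefactor and produces
$$\int_K \int_{\mathfrak{p}} f\bigl(k\exp(tX),t\bigr)\,J(tX)\,dk\,dX,$$
exactly the claimed formula.

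There is no genuine obstacle here; the argument is a routine two-step unwrap-and-rescale. The only points to track carefully are the identification of $G_t$ with $G$ as a smooth Lie group (so that the Cartan decomposition on $G$ applies verbatim to the integral over $G_t$) and the Jacobian of the dilation on $\mathfrak{p}$. As a sanity check, note that since $\frac{\sinh(\ad_{tX})}{\ad_{tX}} \to \mathrm{Id}$ as $t \to 0$, one has $J(tX) \to 1$, and the limit of the formula recovers the product measure $dk\,dX$ on the Cartan motion group $G_0 = K \ltimes \mathfrak{p}$; this is the compatibility that makes the family $\{d_t g\}_{t \in \mathbb{R}}$ assemble into a smooth family of Haar measures on the groupoid $\mathbb{G}$, and is the reason for the particular scaling $|t|^{-\dim \mathfrak{p}}$.
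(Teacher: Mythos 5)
Your argument is correct and is precisely the computation the paper leaves implicit (the proposition is stated with a $\square$ and no proof): unwind $d_t g = |t|^{-\dim\mathfrak{p}}\,dg$, apply the Cartan-decomposition integral formula \eqref{integral-Cartan-decompositon}, and absorb the prefactor via the dilation $Y = tX$ on $\mathfrak{p}$, whose Jacobian is $|t|^{\dim\mathfrak{p}}$. Your closing remark about the $t\to 0$ compatibility with the product measure $dk\,dX$ on $G_0$ correctly identifies the purpose of the scaling factor, though strictly speaking the $t=0$ case of the displayed formula is just the definition of the measure on $G_0$ rather than something requiring a limit.
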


\begin{lemma}\label{lemma-jacobian-limit}
    The limit of the Jacobian $J(tX)$ as $t\rightarrow 0$ is one, that is,
    $$
    \lim_{t\rightarrow 0} J(tX)=1.$$
\end{lemma}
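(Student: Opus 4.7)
The plan is to use the power series expansion of the operator $\frac{\sinh(\ad_{tX})}{\ad_{tX}}$ and exploit the continuity of the determinant. Recall that by the cited integral formula, $J(tX)$ is the absolute value of the determinant on $\mathfrak{p}$ of the operator
\[
\frac{\sinh(\ad_{tX})}{\ad_{tX}} \;=\; \sum_{k=0}^\infty \frac{(\ad_{tX})^{2k}}{(2k+1)!} \;=\; \sum_{k=0}^\infty \frac{t^{2k}(\ad_X)^{2k}}{(2k+1)!},
\]
which is defined by this entire power series (or equivalently by applying the analytic function $z\mapsto \sinh(z)/z$, extended to equal $1$ at $z=0$, to the endomorphism $\ad_{tX}$).

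First I would observe that each term with $k\geq 1$ carries a factor of $t^{2k}$, and $\ad_X$ is a fixed endomorphism of $\mathfrak{g}$ (preserving $\mathfrak{p}$ after composition, since $[\mathfrak{p},\mathfrak{p}]\subset \mathfrak{k}$ and $[\mathfrak{k},\mathfrak{p}]\subset \mathfrak{p}$ imply that the even powers $(\ad_X)^{2k}$ map $\mathfrak{p}$ to $\mathfrak{p}$). Hence the partial sums converge in operator norm uniformly for $t$ in any bounded interval, and
\[
\lim_{t\to 0}\;\frac{\sinh(\ad_{tX})}{\ad_{tX}}\Bigg|_{\mathfrak{p}} \;=\; \Id_{\mathfrak{p}}.
\]

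Finally, since the determinant is a continuous (in fact polynomial) function of the matrix entries, and the absolute value is continuous, we conclude
\[
\lim_{t\to 0} J(tX) \;=\; \bigl|\det\nolimits_{\mathfrak{p}}(\Id_{\mathfrak{p}})\bigr| \;=\; 1,
\]
as desired. There is no real obstacle here; the statement is essentially the continuity of $\det$ applied to the analytic family of operators $\frac{\sinh(\ad_{tX})}{\ad_{tX}}$ at $t=0$, where this operator specializes to the identity by the standard convention $\sinh(z)/z|_{z=0}=1$.
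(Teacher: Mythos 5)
Your proof is correct and follows essentially the same route as the paper: expand $\sinh(\ad_{tX})/\ad_{tX}$ as a power series in $t$, observe that only the identity term survives as $t\to 0$, and invoke continuity of the determinant. The extra remarks you include — that even powers of $\ad_X$ preserve $\mathfrak{p}$, and that the series converges uniformly on bounded $t$-intervals — are accurate and slightly more detailed than the paper's presentation, but do not change the argument.
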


\begin{proof}
    Note that
    $$
    \sinh(\ad_{tX})=\ad_{tX}+\frac{1}{3!}\ad_{tX}^3+\frac{1}{5!}\ad_{tX}^5+\cdots$$
    so then
    \begin{align*}
        \frac{\sinh(\ad_{tX})}{\ad_{tX}}=\Id+\frac{1}{3!}\ad_{tX}^2+\frac{1}{5!}\ad_{tX}^4+\cdots.
    \end{align*}
    Then we obtain
    $$
    \lim_{t\rightarrow 0} \frac{\sinh(\ad_{tX})}{\ad_{tX}}=\Id.$$
    Since the determinant is continuous, we get the statement of the lemma.
\end{proof}

\begin{remark*} In \cite{HigsonRoman20}, the authors characterized the smooth structure on $\GG$ using the Iwasawa decomposition. First, the fiber $G_0$ was defined as $K\ltimes (\mathfrak{g}/\mathfrak{k})$. Then a diffeomorphism $K\times \mathfrak{a}\times\mathfrak{n}\times \R\rightarrow \GG$ was constructed with the formula
$$
(k,X,Y,t)\mapsto \begin{cases}
    (k\exp(tX)\exp(tY),t) & t\neq 0 \\
    (k,[X+Y],0) & t=0,
\end{cases}
$$
where $[X+Y]$ is the class in the quotient space $\mathfrak{g}/\mathfrak{k}$. We will modify this to better serve our purpose. 
\end{remark*}

\begin{lemma}
    The following limit holds under the smooth structure of $\GG$
    $$\lim_{t\rightarrow 0} (k\exp(tX)\exp(tY),t) = (k,X+\phi(Y),0).
$$
\end{lemma}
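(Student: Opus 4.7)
The plan is to pull everything back through the defining chart of $\mathbb{G}$: recall that the smooth structure is characterized by the diffeomorphism $\psi: K \times \mathfrak{p} \times \mathbb{R} \to \mathbb{G}$ sending $(k, Z, t)$ to $(k\exp(tZ), t)$ for $t \neq 0$ and to $(k, Z, 0)$ for $t = 0$. Since $\psi^{-1}$ is continuous, it suffices to exhibit $\psi^{-1}(k\exp(tX)\exp(tY), t)$ for $t \neq 0$ and verify that it tends to $(k, X + \phi(Y), 0)$ as $t \to 0$.

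For small $t \neq 0$, I would apply the global Cartan decomposition to the curve $\gamma(t) := \exp(tX)\exp(tY)$, writing uniquely $\gamma(t) = u(t)\exp(W(t))$ with $u(t) \in K$ and $W(t) \in \mathfrak{p}$. Then $\psi^{-1}(k\gamma(t), t) = (k\,u(t),\, t^{-1}W(t),\, t)$. Continuity of the Cartan decomposition together with $\gamma(0) = e$ yields $u(t) \to e$, hence $k\,u(t) \to k$; the remaining task is to prove $t^{-1}W(t) \to X + \phi(Y)$.

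For this I would differentiate at $t = 0$. Clearly $\gamma'(0) = X + Y \in \mathfrak{g}$. On the other hand, the local inverse of the Cartan map $K \times \mathfrak{p} \to G$, $(u, Z) \mapsto u\exp(Z)$, is smooth near the identity, and its differential at $(e, 0)$ is simply the canonical identification $\mathfrak{g} = \mathfrak{k} \oplus \mathfrak{p}$. Consequently $(u'(0), W'(0))$ is the $(\mathfrak{k}, \mathfrak{p})$-decomposition of $X + Y$. Since $X \in \mathfrak{a} \subset \mathfrak{p}$, the $\mathfrak{p}$-component of $X$ is $X$ itself, while for $Y \in \mathfrak{n}$ the $\mathfrak{p}$-component is by definition $\phi(Y) = \tfrac{1}{2}(Y - \theta Y)$. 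Therefore $W'(0) = X + \phi(Y)$, and a first-order Taylor expansion gives $W(t) = t(X + \phi(Y)) + O(t^2)$, whence $t^{-1}W(t) \to X + \phi(Y)$ as required.

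The argument is essentially routine once the Cartan-decomposition chart is used in preference to the Iwasawa chart; the only point requiring care is the identification of the $\mathfrak{p}$-projection $\mathfrak{g} \to \mathfrak{p}$ coming from the Cartan splitting with the map $\phi$ of (\ref{eq:phi}), which is precisely what produces the factor $\phi(Y)$ from the Iwasawa-type generator $Y \in \mathfrak{n}$.
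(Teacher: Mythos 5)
Your proof is correct. You and the paper reach the same endpoint by computing the $\mathfrak{p}$-component of $X+Y$, but the mechanics differ slightly. The paper works with the Baker--Campbell--Hausdorff expansion directly: it first combines $\exp(tX)\exp(tY) = \exp\bigl(t(X+Y)+O(t^2)\bigr)$, then splits $X+Y=Z_1+Z_2$ into its $\mathfrak{k}$- and $\mathfrak{p}$-parts, and uses BCH again to refactor this as $\exp(tZ_1)\exp\bigl(tZ_2+O(t^2)\bigr)$ so that the chart formula $(k',W,t)\mapsto(k'\exp(tW),t)$ can be read off term by term. You instead invoke the global Cartan decomposition as a smooth chart, so that the curve $\gamma(t)=\exp(tX)\exp(tY)$ factors as $u(t)\exp(W(t))$ automatically, and then you differentiate at $t=0$, using the fact that the differential of $(u,Z)\mapsto u\exp(Z)$ at $(e,0)$ is the canonical splitting $\mathfrak{g}=\mathfrak{k}\oplus\mathfrak{p}$. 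Your route is a touch more conceptual---it pushes the hard work onto the smoothness of the Cartan diffeomorphism and replaces two BCH manipulations with one derivative computation---while the paper's version is more self-contained since it only uses BCH and the definition of the chart. Both correctly identify $\phi$ as the $\mathfrak{p}$-projection, which is the crux; and you rightly noted that $\phi(X)=X$ for $X\in\mathfrak{a}\subset\mathfrak{p}$, which is also (implicitly, with a typo $Z\leftrightarrow X$ in the displayed equations) what the paper uses in its final line $Z_2=\phi(X+Y)=X+\phi(Y)$.
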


\begin{proof}
    We have
    $$
    (k\exp(tX)\exp(tY),t)=(k\exp(t(X+Y)+O(t^2),t).$$\
    Let $Z=X+Y$, an element of $\mathfrak{g}$. Let
    \begin{align*}
        Z_1=&\frac{1}{2}(X+\theta X) \in \mathfrak{k}\\
        Z_2=&\phi(X)=\frac{1}{2}(X-\theta X) \in \mathfrak{p},
    \end{align*}
so that $Z=Z_1+Z_2$. Then
\begin{align*}
    (k\exp(t(Z_1+Z_2)+O(t^2)),t) & = (k\exp(tZ_1)\exp(tZ_2+O(t^2)),t)\\
    & \rightarrow (k,Z_2,0)
\end{align*}
as $t\rightarrow 0$. But
$$Z_2=\phi(Z)=\phi(X+Y)=X+\phi(Y).$$
    
\end{proof}

\

We recall one more important theorem, this time from \cite{song2022cartan}.
\begin{proposition}(\cite[Definition~3.2]{song2022cartan})
    For every positive real number $t$, the Haar measure $d_tg$ on $G_t$ is given by the following formula 
\begin{align*}    \int_{G_t} f(g,t)\;d_tg = t^{\dim\mathfrak{a}_s-\dim\mathfrak{p}} &\int_K\int_{\mathfrak{a}^+}\int_K f(k_1\exp(tH)k_2,t)\\
& \qquad \prod_{\alpha\in \Delta^+(\mathfrak{g},\mathfrak{a})} \sinh(\alpha(tH))^{\dim \mathfrak{g}_\alpha}\;dk_1\;dH\;dk_2.
\end{align*}\qed
\end{proposition}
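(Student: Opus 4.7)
The plan is to combine three ingredients: the $KAK$-integration formula (\ref{integral-KAK-decomposition}) on $G$, the scaling definition $d_tg = |t|^{-\dim\mathfrak{p}}\,dg$ recalled just above, and a linear change of variables on $\mathfrak{a}$. First I would apply (\ref{integral-KAK-decomposition}) to the function $g\mapsto f(g,t)$ with $t > 0$ fixed, writing the $dg$-integral over $G$ as the triple integral over $K\times \mathfrak{a}^+\times K$ with density $\prod_{\alpha\in\Delta^+(\mathfrak{g},\mathfrak{a})}\sinh(\alpha(H))^{\dim\mathfrak{g}_\alpha}$.

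Next I would perform the linear substitution $H\mapsto tH$ in the inner $\mathfrak{a}$-integral. Because $\mathfrak{a}^+$ is the open cone defined by the linear inequalities $\alpha(H)>0$ for $\alpha\in\Delta^+(\mathfrak{g},\mathfrak{a})$, it is preserved under multiplication by the positive scalar $t$; the substitution is therefore a diffeomorphism of $\mathfrak{a}^+$ onto itself with constant Jacobian $t^{\dim\mathfrak{a}_s}$. After this change of variables the argument of $f$ becomes $k_1\exp(tH)k_2$ and the density becomes $\prod_{\alpha}\sinh(\alpha(tH))^{\dim\mathfrak{g}_\alpha}$, producing an overall factor $t^{\dim\mathfrak{a}_s}$ in front of the triple integral.

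Finally I would multiply by $|t|^{-\dim\mathfrak{p}} = t^{-\dim\mathfrak{p}}$ to pass from $dg$ to $d_tg$, combining the two scalar factors into the claimed $t^{\dim\mathfrak{a}_s-\dim\mathfrak{p}}$. The argument is essentially routine, and there is no substantive obstacle; the only point worth remarking on is that the hypothesis $t>0$ (rather than merely $t\neq 0$) is used precisely to ensure that the rescaling $H\mapsto tH$ preserves the positive Weyl chamber $\mathfrak{a}^+$, which is why the proposition is formulated only for positive $t$.
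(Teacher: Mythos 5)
Your proof is correct and is exactly the computation implicit in the paper (which simply cites \cite[Definition~3.2]{song2022cartan} and omits the argument): apply the $KAK$ integration formula \eqref{integral-KAK-decomposition}, substitute $H\mapsto tH$ on $\mathfrak{a}=\mathfrak{a}_s$ with Jacobian $t^{\dim\mathfrak{a}_s}$, and multiply by the scaling factor $t^{-\dim\mathfrak{p}}$ that defines $d_tg$. Your remark that $t>0$ is needed so that the rescaling preserves the positive Weyl chamber $\mathfrak{a}^+$ is the right thing to flag.
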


\begin{theorem}(\cite[Theorem~3.3]{song2022cartan})
    Suppose that $G$ is a reductive group and $K$ is its maximal compact subgroup. We assume that $\rank G=\rank K$. If $f\in C^\infty_c(\GG)$ and if $x\in K$ is a regular element, then
    \begin{equation*}
        \lim_{t\rightarrow 0} \int_{G_t} f(gxg^{-1},t)\;d_tg = \int_{K} \int_\mathfrak{p} f(kxk^{-1},\Ad_{kxk^{-1}} X-X,0)\;dk\;dX.
    \end{equation*}
\end{theorem}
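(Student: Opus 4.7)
The plan is to write the integral in Cartan-decomposition coordinates, identify the pointwise limit of the integrand using the smooth structure of $\GG$, invoke dominated convergence to pass the limit inside, and clean up the resulting expression by a change of variables in $\mathfrak{p}$.

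First I would apply the Cartan-decomposition integration formula on $G_t$ to write
\begin{equation*}
\int_{G_t} f(gxg^{-1},t)\,d_tg
= \int_K\int_{\mathfrak{p}} f\bigl(k\exp(tX)\,x\,\exp(-tX)\,k^{-1},\,t\bigr)\,J(tX)\,dk\,dX.
\end{equation*}
Next I would compute the pointwise limit as $t\to 0$ of the integrand. Writing $\exp(tX)\,x = x\exp(t\Ad_{x^{-1}}X)$ and applying Baker--Campbell--Hausdorff gives $\exp(tX)\,x\,\exp(-tX) = x\exp\bigl(t(\Ad_{x^{-1}}X-X) + O(t^2)\bigr)$, so conjugating by $k$ yields
\begin{equation*}
gxg^{-1} = (kxk^{-1})\exp\bigl(tZ(t)\bigr),\qquad Z(0)=\Ad_k(\Ad_{x^{-1}}X-X)\in\mathfrak{p}.
\end{equation*}
To read off the limit in $\GG$, I would put this into the standard form $k_*(t)\exp(tX_*(t))$ with $k_*(t)\in K$, $X_*(t)\in\mathfrak{p}$, by applying the Cartan decomposition to $\exp(tZ(t))$. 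A short Taylor expansion at $t=0$ gives $k_*(0)=kxk^{-1}$ and $X_*(0)=\phi(Z(0))=Z(0)$, since $Z(0)\in\mathfrak{p}$ and $\phi$ restricts to the identity on $\mathfrak{p}$; this is the single-exponential analog of the lemma immediately preceding the theorem. Combined with Lemma~\ref{lemma-jacobian-limit} ($J(tX)\to 1$) and the smoothness of $f$ on $\GG$, this gives the pointwise convergence
\begin{equation*}
\lim_{t\to 0}\bigl[f(gxg^{-1},t)\,J(tX)\bigr] = f\bigl(kxk^{-1},\,\Ad_k(\Ad_{x^{-1}}X-X),\,0\bigr).
\end{equation*}

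The main step is justifying the exchange of limit and integral. The integrand is the pullback of $f$ under the smooth map $\chi:K\times\mathfrak{p}\times\R\to\GG$, $(k,X,t)\mapsto (k\exp(tX)x\exp(-tX)k^{-1},t)$; its restriction to $t=0$ is the $G_0$-orbit map through $(x,0)$, whose second component is the linear operator $X\mapsto\Ad_k(\Ad_{x^{-1}}-\Id)X$ on $\mathfrak{p}$. The regularity of $x$ and the equal-rank hypothesis force $Z_G(x)=T\subset K$, so $\mathfrak{z}_{\mathfrak{g}}(x)\cap\mathfrak{p}=0$ and $\Ad_{x^{-1}}-\Id$ is invertible on $\mathfrak{p}$; hence $\chi|_{t=0}$ is proper. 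By smoothness, $\chi^{-1}(\mathrm{supp}(f))$ then remains inside a fixed compact subset of $K\times\mathfrak{p}\times[-\varepsilon,\varepsilon]$ for some $\varepsilon>0$, giving a uniform compactly-supported (hence integrable) dominating function. Dominated convergence then yields
\begin{equation*}
\lim_{t\to 0}\int_{G_t}f(gxg^{-1},t)\,d_tg = \int_K\int_{\mathfrak{p}} f\bigl(kxk^{-1},\,\Ad_k(\Ad_{x^{-1}}X-X),\,0\bigr)\,dk\,dX.
\end{equation*}

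Finally I would match this with the claimed form by the change of variables $X\mapsto -\Ad_{xk^{-1}}X$ in $\mathfrak{p}$, which is an isometry (hence measure preserving) since $xk^{-1}\in K$. A direct computation gives
\begin{equation*}
\Ad_k\bigl(\Ad_{x^{-1}}(-\Ad_{xk^{-1}}X)-(-\Ad_{xk^{-1}}X)\bigr)
= -X + \Ad_{kxk^{-1}}X = \Ad_{kxk^{-1}}X - X,
\end{equation*}
which converts the integrand to $f(kxk^{-1},\Ad_{kxk^{-1}}X-X,0)$ as stated. The main obstacle I anticipate is the dominated-convergence argument, specifically verifying that $\chi^{-1}(\mathrm{supp}(f))$ stays in a fixed compact set uniformly in $t$; this is precisely where the regularity of $x$ combined with the equal-rank assumption gets used.
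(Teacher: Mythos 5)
Your outline — pass to Cartan coordinates, identify the pointwise limit via Baker–Campbell–Hausdorff and the smooth structure of $\GG$, apply dominated convergence, then change variables in $\mathfrak{p}$ — is sound, and the two explicit computations (the pointwise limit $(gxg^{-1},t)\to(kxk^{-1},\Ad_k(\Ad_{x^{-1}}X-X),0)$ and the substitution $X\mapsto -\Ad_{xk^{-1}}X$) are both correct. Note that the paper itself only cites this result from \cite{song2022cartan} and does not reproduce a proof, so there is no internal proof to compare against; however, the proposition the paper quotes immediately before the theorem is the $KAK$ integral formula with the explicit $\sinh$-densities, which suggests the cited reference controls the integral by $KAK$ rather than by the global Cartan decomposition you use. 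Both routes should work, and yours has the advantage of producing the limit formula almost immediately from the BCH expansion.

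There is, however, a genuine gap in the dominated-convergence step, and it is exactly where you flag the difficulty. You assert that because $\chi|_{t=0}$ is proper and $\chi$ is smooth, the preimage $\chi^{-1}(\mathrm{supp}\,f)$ stays in a fixed compact subset of $K\times\mathfrak{p}\times[-\varepsilon,\varepsilon]$. That implication does not hold in general: for instance $\chi(X,t)=X+tX^3$ on $\R\times\R$ is smooth, $\chi|_{t=0}$ is the identity (hence proper), yet $\chi^{-1}(\{0\})$ contains the unbounded set $\{(1/\sqrt{-t},t):t<0\}$. Smoothness plus properness of the $t=0$ slice gives you nothing about nearby slices unless you control the higher-order-in-$t$ terms, which in your setting grow like powers of $|X|$. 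To close the gap you need to use more structure than the invertibility of $\Ad_{x^{-1}}-\Id$ alone. A workable argument is: (i) since $x$ is semisimple, its conjugacy class is closed and $G/Z_G(x)\to G$, $gZ_G(x)\mapsto gxg^{-1}$, is proper; (ii) regularity and equal rank give $Z_G(x)=T$ compact, so $G\to G/T$ is also proper, hence $g\mapsto gxg^{-1}$ is proper on $G$; therefore any sequence $(k_n,X_n,t_n)$ with $\chi(k_n,X_n,t_n)\in\mathrm{supp}\,f$ has $\exp(t_nX_n)$ bounded; (iii) separately rule out the regime $t_n|X_n|\to 0$ with $|X_n|\to\infty$ by the first-order expansion, where invertibility of $\Ad_{x^{-1}}-\Id$ forces $|X'_n|\gtrsim|X_n|\to\infty$, contradicting $|X'_n|\le R$; and (iv) rule out $t_n|X_n|\to c>0$ by the uniqueness of the global Cartan decomposition, which forces $\exp(t_nX_n)\to e$ once $g_n$ converges into $K$. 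Only after all three regimes are excluded do you get the uniform compact support needed for dominated convergence.
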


\section{Definition of higher orbital integrals for motion groups}\label{section-higher-orbital-integral-motion-group}

In this section, we introduce higher orbital integrals on motion groups generalizing the construction on reductive groups in \cite{ST2019higher}. We then prove that it is a cyclic cocycle.

\subsection{Definition of higher orbital integral}

Let $G_0=K\ltimes V$ be the motion group, where $K$ is a compact group acting isometrically on a (finite-dimensional) Euclidean space $V$, with the product law
$$
(k_0,v_0)\cdot(k_1,v_1)=(k_0k_1,k_1^{-1}v_0+v_1).$$
Here, whenever $k\in K$ and $v\in V$, the element $kv\in V$ denotes the image of $v$ under the action of $k$. Let $A\subset V$ be a subspace with $\dim A=n$. Naturally, we can write $V=A\oplus A^\perp$. 
\begin{definition} \label{definition-H-components} Fix an orthonormal basis $e_1,\ldots,e_n$ for $A$ and define a linear map $H_j:V\rightarrow \R$ by 
$$
H_j(v)=\langle v,e_j\rangle .$$
\end{definition}

Let 
\begin{equation}\label{stabilizer} M_K:=\{k\in K : kv=v,\; v\in A\}\end{equation} be the stabilizer of $A$ in $K$. 

\begin{remark}\label{M-K-Remark} In the case of a Cartan motion group $G_0=K\ltimes \mathfrak{p}$, the group $M_K$ is just $Z_K(\mathfrak{a})$, where $\mathfrak{a}$ is a subspace of a maximal abelian Lie subalgebra $\mathfrak{a}_s$ of $\mathfrak{p}$. Note then that $Z_{G_0}(\mathfrak{a})=M_K\ltimes\mathfrak{a}$. Then we have an ``Iwasawa" decomposition of the Cartan motion group
$$
G_0=(KM_K)\ltimes (\mathfrak{a}\oplus\mathfrak{a}^\perp),$$
where $\mathfrak{a}^\perp$ is the orthogonal complement of $\mathfrak{a}$ in $\mathfrak{p}$. We may think of $M_K\ltimes \mathfrak{a}$ as analogous to the group $M$ in the Iwasawa decomposition of the reductive group $G=KMAN$. 
\end{remark}

\begin{lemma}\label{lemma-orthogonal-complement-is-invariant}
    The orthogonal complement $A^\perp$ of $A$ in $V$ is $M_K$-invariant.
\end{lemma}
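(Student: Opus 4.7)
The plan is to exploit the two defining features at play: the $K$-action on $V$ is isometric, and elements of $M_K$ fix $A$ pointwise. These together force $A^\perp$ to be stable.

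Concretely, I would take arbitrary $k \in M_K$ and $w \in A^\perp$, and verify that $kw$ is orthogonal to every $v \in A$. The computation is a one-liner: since $k$ fixes $v$ pointwise (as $v \in A$ and $k \in M_K$), one has $v = kv$, so
\[
\langle kw, v\rangle = \langle kw, kv\rangle = \langle w, v\rangle = 0,
\]
where the middle equality uses that $K$ acts by isometries on $V$, and the last equality uses $w \in A^\perp$. Since $v \in A$ was arbitrary, $kw \in A^\perp$, proving invariance.

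There is no real obstacle here; the statement is essentially immediate from the definition of $M_K$ in \eqref{stabilizer} together with the hypothesis of isometric action recorded at the start of Section~\ref{section-higher-orbital-integral-motion-group}. The only thing worth flagging is the mild asymmetry that $M_K$ is defined as the \emph{pointwise} stabilizer of $A$ (not merely the setwise stabilizer), which is exactly what makes the identity $kv = v$ available for every $v \in A$, and hence what makes the short argument above work.
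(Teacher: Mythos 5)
Your proof is correct and is essentially the paper's argument: both use isometry of the $K$-action together with the fact that $M_K$ fixes $A$ pointwise. The paper writes the computation as $\langle kw,v\rangle=\langle w,k^{-1}v\rangle=\langle w,v\rangle$, while you substitute $v=kv$ first and then invoke isometry; these are the same one-line argument.
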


\begin{proof}
    Recall that the action of $K$ on $V$ is isometrically. So let $w\in A^\perp$ and let $k\in K$. For any $v\in A$, we have 
    $$\langle kw,v\rangle=\langle w,k^{-1}v\rangle=\langle w,v\rangle = 0.$$
    Thus, $kw\in A^\perp$.
\end{proof}

Let $x\in M_K$. Recall that $S_n$ is the permutation group of $n$ letters.
\begin{definition}\label{regular-general-rank-definition}
Let $f_0,f_1,\ldots,f_n\in C^\infty_c(G_0)$. For any $\sigma \in S_n$, we define 
\begin{align}
\notag &\tau_{A,x,\sigma}(f_0, f_1, \dots, f_n)\\
%\label{regular-higher-orbital-general-rank} 
\colon =&\int_{h\in M_K/Z_{M_K}(x)} \int_{u \in K} \int_{w \in A^\perp}\int_{G_0^n} \\
& \notag f_0\left(uhxh^{-1}u^{-1}k_n^{-1}\cdots k_1^{-1},k_1\ldots k_n uw -\sum_{j=1}^n \left(\prod_{l=1}^{j} k_l\right)v_j\right) \\
\notag &\prod_{j=1}^n f_j(k_j, v_j) \cdot  \prod_{j=1}^n H_{\sigma(j)}\left(u^{-1} \left(\prod^n_{l=j+1} k_l\right)^{-1} v_j\right),
\end{align}
thus, we define the \emph{higher orbital integral with respect to $x\in M_K$} as
\begin{equation}
\label{regular-higher-orbital-integral-sum} 
\tau_{A,x}(f_0,f_1,\ldots,f_n)= \sum_{\sigma\in S_n} \sign(\sigma) \tau_{A,x,\sigma}(f_0,f_1,\ldots,f_n). 
\end{equation}
\end{definition}
In Definition \ref{regular-general-rank-definition}, we have omitted the measure ${\rm{d}}h{\rm{d}}u{\rm d}w{\rm d}k_1\cdots {\rm d}k_n$. In the remaining part of the paper, we will omit the measure when it is clear. 
\begin{theorem}
    For any element $x\in M_K$, the higher orbital integral defined in \eqref{regular-higher-orbital-integral-sum} is a cyclic cocycle.
\end{theorem}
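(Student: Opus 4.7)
The plan is to verify the two conditions that make $\tau_{A,x}$ a cyclic cocycle: first, that it is cyclic, i.e.\ $\tau_{A,x}(f_n,f_0,\ldots,f_{n-1}) = (-1)^n \tau_{A,x}(f_0,\ldots,f_n)$, and second, that it is closed under the Hochschild coboundary, $b\tau_{A,x} = 0$. The overall strategy is to mirror the proof of the corresponding statement for the reductive-group cocycle $\Phi_{P,x}$ in \cite[Theorem~3.5]{ST2019higher}, using the linearity of the coordinate functions $H_j$ on $V$ as a replacement for the additive behavior of the Iwasawa map $H \colon G \to \mathfrak{a}$.

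For cyclicity, I would begin from $\tau_{A,x,\sigma}(f_n,f_0,\ldots,f_{n-1})$, treat the argument of what was $f_0$ as if it were an extra group element $g_0 = (k_0, v_0)$, and then perform the substitution $(k_j,v_j) \mapsto (k_{j+1},v_{j+1})$ together with a compensating shift in $u$ and $w$. The semidirect product law $(k_1,v_1)(k_2,v_2) = (k_1 k_2, k_2^{-1} v_1 + v_2)$ dictates exactly how these shifts interact with the twisted second coordinate $k_1 \cdots k_n u w - \sum_j (\prod_{l=1}^j k_l) v_j$ inside $f_0$, and unimodularity of $K$, $M_K$, $V$, and $A^\perp$ ensures the Haar measures are preserved. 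After this reparametrization the $f_j$ reappear in their original order and the arguments of the $H_{\sigma(j)}$ are permuted by a cyclic rotation of $\{1,\ldots,n\}$; reindexing $\sigma \in S_n$ by this rotation contributes the sign $(-1)^{n-1}$, which together with the single transposition of $f_n$ past the remaining factors yields the desired overall sign $(-1)^n$.

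For the cocycle property, I would substitute the convolution products $f_k \ast f_{k+1}$ into $\tau_{A,x}$, which introduces an auxiliary integration in each Hochschild term, and compare the resulting $(n+1)$-variable integrand against an alternating sum indexed by $S_{n+1}$. The decisive algebraic input is the linearity of $H_j$: since the $V$-component of $(k,v)(k',v')$ is $k'^{-1} v + v'$ and $H_j(w + w') = H_j(w) + H_j(w')$, the argument of each $H_{\sigma(j)}$ in the merged term splits as a sum of contributions from the unmerged variables. Distributing these sums across the permutations and using that any two adjacent terms in the Hochschild coboundary occur with opposite signs produces a telescoping cancellation. The final term $(-1)^{n+1} \tau_{A,x}(f_{n+1} f_0, f_1, \ldots, f_n)$ is handled by applying the cyclicity already established and matching it against the first and last pieces of the merged sum.

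The principal obstacle will be the bookkeeping in the cocycle calculation, because of the inversions $\bigl(\prod_{l=j+1}^n k_l\bigr)^{-1}$ inside the $H_{\sigma(j)}$-factors and because of the twisted second coordinate inside $f_0$, which becomes even more intricate after two factors are merged. I expect the cleanest route is to first re-express the integrand through a genuine group-theoretic product $(k_0, 0)(k_1, 0) \cdots (k_n, 0) \cdot (e, w)$ together with translations, so that each $H_{\sigma(j)}$ is read off as $H_j$ of the $A$-component of a bona fide product in $G_0$; the required cancellation then reduces to the linearity of $H_j$ combined with the $S_{n+1}$ sign combinatorics, in direct analogy with the reductive case in \cite{ST2019higher}.
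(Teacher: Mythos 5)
Your proposal follows essentially the same strategy as the paper's proof: both parts are established by explicit changes of variables in the iterated integral, with the linearity of the $H_j$ splitting the merged arguments in the Hochschild coboundary and producing a telescoping cancellation, and with a cyclic reindexing of $\sigma\in S_n$ (sign $(-1)^{n-1}$) accounting for most of the sign in the cyclicity identity. Two details are worth flagging so they do not trip you up when carrying this out. First, the extra factor of $-1$ in the cyclic computation does \emph{not} come from ``transposing $f_n$ past the remaining factors'' (there is no such transposition in a cyclic shift, and that accounting would give the wrong total sign); in the paper it arises from the minus sign that appears inside the argument of $H_{\sigma(1)}$ after the change of variables, followed by a cancellation of all but one term via swapping $\sigma(1)\leftrightarrow\sigma(j+1)$. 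Second, the ``compensating shift in $w$'' is not merely a translation in $A^\perp$: in both the cocycle and cyclicity arguments one must decompose a vector such as $hx^{-1}h^{-1}u'^{-1}v'_n$ (or $-u^{-1}kv$) into its $A$ and $A^\perp$ components and then absorb the $A^\perp$ piece into $w$; this crucially uses the $M_K$-invariance of $A^\perp$ (Lemma~\ref{lemma-orthogonal-complement-is-invariant}) and the fact that $h$ and $x$ fix $A$ pointwise, so the resulting $w'$ genuinely remains in $A^\perp$ and the $A$-component drops out of the $H$-functions. With these two corrections the plan matches the paper's argument.
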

The proof will be given in the next two subsections.

\subsection{Cocycle Condition}

    To prove that it is a cocycle, we wish to show
    \begin{align}
        \label{regular-cocycle}\tau_{A,x}(f_0*f_1,f_2,\ldots,f_{n+1})+ \sum_{i=1}^n (-1)^i \tau_{A,x}(f_0,\ldots,f_i*f_{i+1},\ldots,f_{n+1})\\\notag +(-1)^{n+1}\tau_{A,x}(f_{n+1},f_0,\ldots,f_{n})=0.
    \end{align}
    We have 
    \begin{align*}
         \tau_{A,x}(&f_0*f_1,f_2,\ldots,f_{n+1}) =\int_{h\in M_K/Z_{M_K}(x)}\int_{u\in K} \int_{w\in A^\perp}\int_{G_0^n}\int_{(k,v)\in G_0} f_0(k,v)\\
        & f_1\Bigl(k^{-1}uhxh^{-1}u^{-1}k_n^{-1}\cdots k_1^{-1},-k_1\cdots k_nuht^{-1}h^{-1}u^{-1} kv+k_1\cdots k_nuw\\
        & -\sum_{j=1}^n\left(\prod_{l=1}^j k_l\right)v_j\Bigr)\\
        & \prod_{j=1}^nf_{j+1}(k_j,v_j)\left[\sum_{\sigma\in S_n}\sign(\sigma)\prod_{j=1}^nH_{\sigma(j)}\left(u^{-1}\left(\prod_{l=j+1}^{n}k_l\right)^{-1}v_j\right)\right].
    \end{align*}
We make the following change of variables
\begin{align*}
    k'_1=k^{-1}uhxh^{-1}u^{-1}k_n^{-1}\cdots k_1^{-1}  \Rightarrow k=uhxh^{-1}u^{-1}k_n^{-1}\cdots k_1^{-1}k'^{-1}_1\\
    k_{j+1}'=k_j\qquad (j=1,\ldots,n)
\end{align*}
to obtain
\begin{align*}
     \int_{h\in M_K/Z_{M_K}(x)}\int_{u\in K}&\int_{w\in A^\perp}\int_{G_0^{n+1}} f_0(uhxh^{-1}u^{-1}k'^{-1}_{n+1}\cdots k'^{-1}_2 k'^{-1}_1,v) \\
    & f_1\left(k'_1,-k'^{-1}_1v+k'_2\cdots k'_{n+1}uw- \sum_{j=1}^n\left( \prod_{l=1}^j k'_{l+1} \right) v_j \right)\\
    & \prod_{j=1}^n f_{j+1}(k'_{j+1},v_j)\left[\sum_{\sigma\in S_n} \sign(\sigma) \prod_{j=1}^n H_{\sigma(j)}\left(u^{-1}\left( \prod_{l=j+1}^n k'_{l+1}  \right)^{-1} v_j \right)   \right].
\end{align*}
We make another change of variables
\begin{align*}
    & v'_1=-k'^{-1}_1v+k'_2\cdots k'_{n+1}uw-\sum_{j=1}^n\left(\prod_{l=1}^j k'_{l+1}\right)v_j\\
    \Rightarrow & v=-k'_1v'_1+k'_1\cdots k'_{n+1} uw -\sum_{j=1}^n\left(\prod_{l=1}^{j+1} k'_l\right)v_j\\
    & v'_{j+1}=v_j\qquad (j=1,\ldots,n)
\end{align*}
to obtain
\begin{align}\label{regular-integral-A}
\tau_{A,x}(&f_0*f_1,f_2,\ldots,f_{n+1})\\
    \notag \int_{h\in M_K/Z_{M_K}(x)} & \int_{u\in K}\int_{w\in A^\perp}\int_{G_0^{n+1}}\\
   \notag &f_0\left(uhxh^{-1}u^{-1}k'^{-1}_{n+1}\cdots k'^{-1}_2 k'^{-1}_1,k'_1\cdots k'_{n+1}uw-\sum_{j=1}^{n+1}\left(\prod_{l=1}^{j}k'_l\right)v'_{j}\right) \\
    \notag & \prod_{j=1}^{n+1} f_j(k'_j,v'_j)\left[\sum_{\sigma\in S_n} \sign(\sigma) \prod_{j=1}^n H_{\sigma(j)}\left(u^{-1}\left( \prod_{l=j+1}^n k'_{l+1}  \right)^{-1} v'_{j+1} \right)   \right].
\end{align}

Now we look at the following terms. Let $i=1,\ldots,n$. We have 
\begin{align*}
    \tau_{A,x}(&f_0,\ldots,f_i*f_{i+1},\ldots,f_{n+1}) = \\
    \int_{h\in M_K/Z_{M_K}(x)}\int_{u\in K}& \int_{w\in A^\perp} \int_{G_0^n} \int_{(k,v)\in G_0} \\
   & f_0\left(uhxh^{-1}u^{-1}k_n^{-1}\cdots k_1^{-1}, k_1\ldots k_n uw -\sum_{j=1}^n \left( \prod_{l=1}^j k_l  \right)v_j\right)\\
  & \prod_{j=1}^{i-1} f_j(k_j,v_j) f_i(k,v)f_{i+1}(k^{-1}k_i,-k_i^{-1}kv+v_i)\prod_{j=i+1}^n f_{j+1}(k_j,v_j)\\
  & \left[ \sum_{\sigma\in S_n} \sign(\sigma) \prod_{j=1}^n H_{\sigma(j)} \left(u^{-1}\left( \prod_{l=j+1}^n k_l \right)^{-1} v_j   \right)          \right].
\end{align*}
We make the first change of variables as follows.
\begin{align*}
    & k'_i=k\quad\& \quad  k'_{i+1}=k^{-1}k_i \qquad \Rightarrow \qquad k_i=k'_i k'_{i+1} \\
    & k'_j=k_j \qquad (j=1,\ldots,i-1)\\
    & k'_{j+1}=k_j \qquad (j=i+1,\ldots,n)
\end{align*}
to obtain
\begin{align*}
    \int_{h\in M_K/Z_{M_K}(x)}&\int_{u\in K}  \int_{w\in A^\perp} \int_{G_0^{n+1}} f_0(uhxh^{-1}u^{-1}k^{-1}_{n+1}\cdots k'^{-1}_1,k'_1\cdots k'_{n+1}uw\\
    &  \qquad -k'_1v_1-\cdots-k'_1\cdots k'_{i-1}v_{i-1}-k'_1\cdots k'_ik'_{i+1}v_i-\cdots k'_1\cdots k'_{n+1}v_n)\\
    & \prod_{j=1}^{i-1} f_j(k'_j,v_j) f_i(k'_i,v)f_{i+1}(k'_{i+1},-k'^{-1}_{i+1}v+v_i)\prod_{j=i+1}^n f_{j+1}(k'_{j+1},v_j) \\
    &\Bigl[ \sum_{\sigma\in S_n}\sign(\sigma) H_{\sigma(1)}(u^{-1}k'^{-1}_{n+1}\cdots k'^{-1}_2 v_1)\cdots H_{\sigma(i-1)}(u^{-1}k'_{n+1}\cdots k'^{-1}_i v_{i-1})\\
    &\qquad H_{\sigma(i)}(u^{-1}k'^{-1}_{n+1}\cdots k'^{-1}_{i+2} v_i)\cdots H_{\sigma(n-1)}(u^{-1}k'^{-1}_{n+1} v_{n-1})H_{\sigma(n)}(u^{-1}v_n)               \Bigr].
\end{align*}
Now we make the next change of variables as follows
\begin{align*}
    & v'_i=v\quad \&\quad v'_{i+1}=-k'^{-1}_{i+1}v+v_i \qquad \Rightarrow \qquad v_i=k'^{-1}_{i+1}v'_i+v'_{i+1}\\
    & v'_j=v_j\qquad (j=1,\ldots,i-1)\\
    & v'_{j+1}=v_j\qquad (j=i+1,\ldots,n)
\end{align*}
to obtain
\begin{align}
    \notag &\int_{h\in M_K/Z_{M_K}(x)}\int_{u\in K}  \int_{w\in A^\perp} \int_{G_0^{n+1}} f_0(uhxh^{-1}u^{-1}k'^{-1}_{n+1}\cdots k'^{-1}_1,k'_1\cdots k'_{n+1}uw\\
   \notag &  -k'_1v'_1-\cdots-k'_1\cdots k'_{i-1}v'_{i-1}-k'_1\cdots k'_ik'_{i+1}(k'^{-1}_{i+1}v'_i+v'_{i+1})-\cdots k'_1\cdots k'_{n+1}v'_{n+1})\\
  \notag  & \prod_{j=1}^{i-1} f_j(k'_j,v'_j) f_i(k'_i,v'_i)f_{i+1}(k'_{i+1},v'_{i+1})\prod_{j=i+1}^n f_{j+1}(k'_{j+1},v'_{j+1}) \\
  \notag  &\Bigl[ \sum_{\sigma\in S_n}\sign(\sigma) H_{\sigma(1)}(u^{-1}k'^{-1}_{n+1}\cdots k'^{-1}_2 v'_1)\cdots H_{\sigma(i-1)}(u^{-1}k'_{n+1}\cdots k'^{-1}_i v'_{i-1})\\
  \notag  &\qquad H_{\sigma(i)}\bigl(u^{-1}k'^{-1}_{n+1}\cdots k'^{-1}_{i+2}(k'^{-1}_{i+1} v'_i+v'_{i+1}\bigr))\cdots H_{\sigma(n-1)}(u^{-1}k'^{-1}_{n+1} v'_n)H_{\sigma(n)}(u^{-1}v'_{n+1})               \Bigr]\\
  \label{regular-integral-B} =&\int_{h\in  M_K/Z_{M_K}(x)} \int_{u\in K} \int_{w\in A^\perp} \int_{G_0^{n+1}}\\
 \notag & f_0\left(uhxh^{-1}u^{-1}k'^{-1}_{n+1}\cdots k'^{-1}_1, k'_1\cdots k'_{n+1}uw - \sum_{j=1}^{n+1} \left(\prod_{l=1}^j k'_l\right) v'_j\right) \prod_{j=1}^{n+1} f_j(k'_j,v'_j)\\
 \notag & \biggl[\sum_{\sigma\in S_n} \sign(\sigma)\prod_{j=1}^{i-1} H_{\sigma(j)}\Bigl(u^{-1}\bigl(\prod_{l=j+1}^{n+1} k'_l\bigr)^{-1}v'_j\Bigr)  \\
 \notag & \qquad H_{\sigma(i)}\left(u^{-1}k'^{-1}_{n+1}\cdots k'^{-1}_{i+1}v'_i+u^{-1}k'^{-1}_{n+1}\cdots k'^{-1}_{i+2}v'_{i+1}\right)\prod_{j=i+1}^n H_{\sigma(j)}\Bigl(u^{-1}\bigl(\prod_{l=j+2}^{n+1} k'_l\bigr)^{-1}v'_{j+1}\Bigr) \biggr].
\end{align}
We then have that $\tau_{A,x}(f_0,\ldots,f_i* f_{i+1},\ldots,f_{n+1})$ is the integral in \eqref{regular-integral-B}.

Now we look at the last term of \eqref{regular-cocycle}.
\begin{align}\label{start-integral-C}
    \tau_{A,x}(&f_{n+1}*f_0,\ldots,f_n) =\\
\notag    \int_{h\in M_K/Z_{M_K}(x)} & \int_{u\in K}  \int_{w\in A^\perp} \int_{G_0^n}\int_{(k,v)\in G_0} f_{n+1}(k,v)\\
 \notag   & f_0\Bigl(k^{-1}uhxh^{-1}u^{-1}k^{-1}_n\cdots k^{-1}_1, -k_1\cdots k_n uht^{-1}h^{-1}u^{-1}kv\\
 \notag   &\qquad +k_1\cdots k_nuw -\sum_{j=1}^n \left(\prod_{l=1}^j k_l\right) v_j\Bigr)\\
 \notag   & \prod_{j=1}^n f_j(k_j,v_j)\left[ \sum_{\sigma\in S_n} \sign(\sigma) H_{\sigma(j)}\left(u^{-1}\left(\prod_{l=j+1}^n k_l\right)^{-1}v_j\right)   \right].
\end{align}
Let $u'=k^{-1}u$, and $-u^{-1}kv=v'+v''$ where $v'\in A$ and $v''\in A^\perp$. By Lemma~\ref{lemma-orthogonal-complement-is-invariant}, $A^\perp$ is $M_K$-invariant. We then obtain
\begin{align*}
    & \biggl(k^{-1}uhxh^{-1}u^{-1}k_n^{-1}\cdots k_1^{-1},-k_1\cdots k_nuhx^{-1}h^{-1}u^{-1}kv\\
    & \qquad +k_1\cdots k_n uw - \sum_{j=1}^n\left(\prod_{l=1}^j k_l\right)v_j\biggr)\\
    = &  \biggl( u'hxh^{-1}u'^{-1}k^{-1}k_n^{-1}\cdots k_1^{-1},k_1\cdots k_nku'hx^{-1}h^{-1}(v'+v'')\\
    &\qquad +k_1\cdots k_nku'w- k_1\cdots k_n k v - k_1\cdots k_n k u'(v'+v'')\\
    & \qquad -\sum_{j=1}^n\left(\prod_{l=1}^j k_l\right)v_j\biggr)\\
    = & \biggl( u'hxh^{-1}u'^{-1}k^{-1}k_n^{-1}\cdots k_1^{-1},k_1\cdots k_nku'(v'+hx^{-1}h^{-1}v''+w-v'-v'')\\
    &\qquad -k_1\cdots k_n k v -\sum_{j=1}^n\left(\prod_{l=1}^j k_l\right)v_j\biggr)\\
     = & \biggl( u'hxh^{-1}u'^{-1}k^{-1}k_n^{-1}\cdots k_1^{-1},k_1\cdots k_nku'(hx^{-1}h^{-1}v''+w-v'')\\
    &\qquad -k_1\cdots k_n k v -\sum_{j=1}^n\left(\prod_{l=1}^j k_l\right)v_j\biggr)
\end{align*}
We note that $hx^{-1}h^{-1}v''\in A^\perp$, so we shall set $w'=hx^{-1}h^{-1}v''+w-v''$. Furthermore, let $(k_{n+1},v_{n+1})=(k,v)$ so that the integral \eqref{start-integral-C} becomes
\begin{align}\label{regular-integral-C}
    \tau_{A,x}(&f_{n+1}*f_0,\ldots,f_n) =\\
\notag  &  \int_{h\in M_K/Z_{M_K}(x)}  \int_{u'\in K}  \int_{w'\in A^\perp} \int_{G_0^{n+1}} \\
 \notag   & f_0\Bigl(u'hxh^{-1}u'^{-1}k_{n+1}^{-1}\cdots k_1^{-1},k_1\cdots k_{n+1}u'w' -\sum_{j=1}^{n+1}\left(\prod_{l=1}^j k_l\right)v_j\Bigr)\\
         \notag   & \prod_{j=1}^{n+1} f_j(k_j,v_j)\left[ \sum_{\sigma\in S_n} \sign(\sigma) H_{\sigma(j)}\left(u'^{-1}\left(\prod_{l=j+1}^{n+1} k_l\right)^{-1}v_j\right)   \right].
\end{align}

Now we substitute \eqref{regular-integral-A}, \eqref{regular-integral-B}, and \eqref{regular-integral-C} into \eqref{regular-cocycle}. To finish the proof for the cocycle property, we need only to show that
\begin{align}
    \label{regular-H-function} & \sum_{\sigma\in S_n}\sign(\sigma) \prod_{j=1}^n H_{\sigma(j)}\left(u^{-1}\left(\prod_{l=j+2}^{n+1} k_l\right)^{-1}v_{j+1}\right)\\
   \notag +&\sum_{i=1}^n\Biggl\{ (-1)^i \sum_{\sigma\in S_n}\sign(\sigma) \prod_{j=1}^{i-1} H_{\sigma(j)}\left(u^{-1}\left(\prod_{l=j+1}^{n+1} k_l\right)^{-1}v_j\right)\\
   \notag\cdot & H_{\sigma(i)}\left(u^{-1}k^{-1}_{n+1}\cdots k^{-1}_{i+1}v_i+u^{-1}k^{-1}_{n+1}\cdots k^{-1}_{i+2}v_{i+1}\right) \prod_{j=i+1}^n H_{\sigma(j)}\left(u^{-1}\left(\prod_{l=j+2}^{n+1}k_l\right)^{-1}v_{j+1}\right)\Biggr\}\\
   \notag  +&(-1)^{n+1}\sum_{\sigma\in S_n}\sign(\sigma) \prod_{j=1}^n H_{\sigma(j)}\left(u^{-1}\left(\prod_{l=j+1}^{n+1} k_l\right)^{-1}v_j\right)
\end{align}
is zero. Then the expression \eqref{regular-H-function} is
\begin{align*}
 \sum_{\sigma\in S_n} &\sign(\sigma) \Biggl\{\prod_{j=1}^n H_{\sigma(j)}\left(u^{-1}\left(\prod_{l=j+2}^{n+1}k_l\right)^{-1}v_{j+1}\right)\\
 &+\sum_{i=1}^n(-1)^i\biggl[\prod_{j=1}^iH_{\sigma(j)}\left(u^{-1}\left(\prod_{l=j+1}^{n+1} k_l\right)^{-1}v_j\right)\prod_{j=i+1}^nH_{\sigma(j)}\left(u^{-1}\left(\prod_{l=j+2}^{n+1}k_l\right)^{-1}v_{j+1}\right)\\
 &\qquad + \prod_{j=1}^{i-1}H_{\sigma(j)}\left(u^{-1}\left(\prod_{l=j+1}^{n+1} k_l\right)^{-1}v_j\right)\prod_{j=i}^nH_{\sigma(j)}\left(u^{-1}\left(\prod_{l=j+2}^{n+1}k_l\right)^{-1}v_{j+1}\right)\biggr]\\
 &+(-1)^{n+1}\prod_{j=1}^n H_{\sigma(j)}\left(u^{-1}\left(\prod_{l=j+1}^{n+1} k_l\right)^{-1}v_j\right)\Biggr\}.
\end{align*}
It is clear that all the terms of the sum within the curly brackets all cancel out. Thus we get that the above expression is zero and we have proven that $\tau_{A,x}$ is a cocycle.

\qed

\subsection{Cyclic condition}

Now we show that $\tau_{A,x}$ is cyclic. In other words, we need to show that
$$
\tau_{A,x}(f_n,f_0,\ldots,f_{n-1})=(-1)^n\tau_{A,x}(f_0,f_1,\ldots,f_n).$$
The left hand side is equal to
\begin{align*}
    \int_{h\in M_K/Z_{M_K}(x)}\int_{u\in K}&\int_{w\in A^\perp}\int_{G_0^n} f_n\left(uhxh^{-1}u^{-1}k^{-1}_n\cdots k^{-1}_1,k_1\cdots k_nuw-\sum_{j=1}^n\bigl(\prod_{l=1}^j k_l\bigr)v_j\right)\\
    & \prod_{j=0}^{n-1} f_j(k_{j+1},v_{j+1})\left[\sum_{\sigma\in S_n}\sign(\sigma) \prod_{j=1}^nH_{\sigma(j)}\left(u^{-1}\left(\prod_{l=j+1}^nk_l\right)^{-1}v_j\right)   \right].
\end{align*}
We make the following change of variables
\begin{align*}
    & k'_{j-1}=k_j\qquad(j=2,\ldots,n)\\
    & k'_n=uhxh^{-1}u^{-1}k^{-1}_n\cdots k^{-1}_1 \qquad \Rightarrow \qquad k_1=k'^{-1}_nuhxh^{-1}u^{-1}k'^{-1}_{n-1}\cdots k'^{-1}_1.
\end{align*}
Furthermore, let $u'=k_n'^{-1}u$ to obtain
$$
k_1=u'hxh^{-1}u'^{-1}k_n'^{-1}\cdots k_1'^{-1}.$$
Then we have
\begin{align*}
    \int_{h\in M_K/Z_{M_K}(x)}\int_{u'\in K} & \int_{w\in A^\perp} \int_{G_0^n} f_n\left(k'_n,u'hxh^{-1}w-\sum_{j=1}^n\left(u'hxh^{-1}u'^{-1}\Bigl(\prod_{l=j}^n k'_l\Bigr)^{-1}\right)v_j   \right)\\
    & f_0(u'hxh^{-1}u'^{-1}k'^{-1}_n\cdots k'^{-1}_1,v_1)\prod_{j=1}^{n-1}f_j(k'_j,v_{j+1})\\
    &\left[\sum_{\sigma\in S_n}\sign(\sigma) \prod_{j=1}^n H_{\sigma(j)}\left(u'^{-1}\left(\prod_{l=j}^n k'_l\right)^{-1}v_j\right)      \right].
\end{align*}
We make the next change of variables
\begin{align*}
    & v_{j-1}'= v_j \qquad (j=2,\ldots,n)\\
    & v_n'=u'hxh^{-1}w-\sum_{j=1}^n u'hxh^{-1}u'^{-1}\left(\prod_{l=j}^n k_l'\right)^{-1} v_j \\
    \Rightarrow & v_1=k_1'\cdots k_n'u'w-k_1'\cdots k_n' u'hx^{-1}h^{-1}u'^{-1}v'_n-\sum_{j=2}^n \prod_{l=1}^{j-1} k_l' v_{j-1}'
\end{align*}
and we obtain
\begin{align*}
    \int_{h\in M_K/Z_{M_K}(x)}\int_{u'\in K} & \int_{w\in A^\perp} \int_{G_0^n} 
     f_0\biggl( u'hxh^{-1}u'^{-1}k'^{-1}_n\cdots k'^{-1}_1,k_1'\cdots k_n'u'w\\
     &-k_1'\cdots k_n' u'hx^{-1}h^{-1}u'^{-1}v_n'-\sum_{j=1}^{n-1}\prod_{l=1}^j k_l' v_j'\biggr)\prod_{j=1}^{n}f_j(k'_j,v_j')\\
    &\Biggl[\sum_{\sigma\in S_n}\sign(\sigma)H_{\sigma(1)}\left(w-hx^{-1}h^{-1}u'^{-1}v_n'-u'^{-1}\sum_{j=1}^{n-1}\Bigl(\prod_{l=j+1}^n k_l'\Bigr)^{-1} v_j'\right)\\ 
    &\prod_{j=1}^n H_{\sigma(j)}\left(u'^{-1}\left(\prod_{l=j}^n k'_l\right)^{-1}v_{j-1}\right)      \Biggr].
\end{align*}
Now set $hx^{-1}h^{-1}u'^{-1}v_n'=w_1+w_2$ where $w_1\in A$ and $w_2\in A^\perp$. Note that we get the following equality from the expression inside $f_0$
\begin{align*}
    & k_1'\cdots k_n'u'w-k_1'\cdots k_n' u'hx^{-1}h^{-1}u'^{-1}v_n'-\sum_{j=1}^{n-1}\prod_{l=1}^j k_l' v_j'\\
    = & k_1'\cdots k_n'u'w-k_1'\cdots k_n' u'(w_1+w_2)\\
    &-k_1'\cdots k_n' v_n' + k_1'\cdots k_n'u'hxh^{-1}(w_1+w_2)-\sum_{j=1}^{n-1}\prod_{l=1}^j k_l' v_j'\\
    = & k_1'\cdots k_n'u'(w-w_1-w_2+hxh^{-1}w_1+hxh^{-1}w_2)-\sum_{j=1}^n\prod_{l=1}^j k_l' v_j'\\
    = & k_1'\cdots k_n'u'(w-w_2+hxh^{-1}w_2)-\sum_{j=1}^n\prod_{l=1}^j k_l' v_j'.\\
\end{align*}
The last equality is because $h$ and $x$ are elements of $M$, thus are stabilizers of $A$ and we have $w_1\in A$. Let $w'=w-w_2+hxh^{-1}w_2$ be the change of variables in $A^\perp$. Furthermore, we make the following observations about the $H$-function. In particular, by taking the same decomposition $hx^{-1}h^{-1}u'^{-1}v_n'=w_1+w_2$, we see that
\begin{align*}
    & H_{\sigma(1)}\left(w-hx^{-1}h^{-1}u'^{-1}v_n'-u'^{-1} \sum_{j=1}^{n-1}\Bigl(\prod_{l=j+1}^n k_l'\Bigr)^{-1}v_j'\right) \\
    = & H_{\sigma(1)}\left(w-w_1-w_2-u'^{-1} \sum_{j=1}^{n-1}\Bigl(\prod_{l=j+1}^n k_l'\Bigr)^{-1}v_j'\right) \\
    = & H_{\sigma(1)}\left(-w_1-u'^{-1} \sum_{j=1}^{n-1}\Bigl(\prod_{l=j+1}^n k_l'\Bigr)^{-1}v_j'\right). \\
\end{align*}
We also note that 
$$
H_{\sigma(1)}(u'^{-1}v_n')=H_{\sigma(1)}(hxh^{-1}w_1+hxh^{-1}w_2)=H_{\sigma(1)}(w_1),$$
and thus
$$
H_{\sigma(1)}\left(-w_1-u'^{-1} \sum_{j=1}^{n-1}\Bigl(\prod_{l=j+1}^n k_l'\Bigr)^{-1}v_j'\right)=H_{\sigma(1)}\left(-u'^{-1}\sum_{j=1}^n \Bigl(\prod_{l=j+1}^n k_l'\Bigr)^{-1}v_j'\right).$$
Thus the integral becomes
\begin{align*}
    \int_{h\in M_K/Z_{M_K}(x)}\int_{u'\in K} & \int_{w'\in A^\perp} \int_{G_0^n} 
     f_0(u'hxh^{-1}u'^{-1}k'^{-1}_n\cdots k'^{-1}_1,k_1'\cdots k_n'u'w'-\sum_{j=1}^n\prod_{l=1}^j k_l' v_j')\\
     &\prod_{j=1}^{n}f_j(k'_j,v_j')\Biggl[\sum_{\sigma\in S_n}\sign(\sigma)H_{\sigma(1)}\left(-u'^{-1}\sum_{j=1}^n\Bigl(\prod_{l=j+1}^n k_l'\Bigr)^{-1} v_j'\right)\\ 
    &\prod_{j=1}^n H_{\sigma(j)}\left(u'^{-1}\left(\prod_{l=j}^n k'_l\right)^{-1}v_{j-1}\right)      \Biggr].
\end{align*}

The expression inside the square bracket is equal to 
\begin{align*}
    -\sum_{\sigma\in S_n}\sign(\sigma) \sum_{j=1}^{n}H_{\sigma(1)}\left(u'^{-1}\left(\prod_{l=j+1}^n k'_l\right)^{-1}v'_j\right)\prod_{p=1}^{n-1} H_{\sigma(p+1)}\left(u'^{-1}\left(\prod_{l=j+1}^n k'_l\right)^{-1}v'_j\right).
\end{align*}
Note that for a single term where $j<n$, the parameter
$$
u^{-1}\left(\prod_{l=j+1}^n k'_l\right)^{-1}$$
appears exactly twice inside the $H$-functions. In particular, they appear in $H_{\sigma(1)}$ and in $H_{\sigma(j+1)}$. There is precisely one $\sigma'\in S_n$ that swaps \emph{only} $\sigma(1)$ and $\sigma(j+1)$ in the index, and thus $\sign(\sigma')=-\sign(\sigma)$. Thus the corresponding term for a fixed $j<n$, $\sigma$ and $\sigma'$ will cancel. We are left with
$$
-\sum_{\sigma\in S_n}\sign(\sigma)H_{\sigma(1)}\left(u'^{-1}\left(\prod_{l=j+1}^n k'_l\right)^{-1}v'_n\right)\prod_{p=1}^{n-1}H_{\sigma(p+1)}\left(u'^{-1}\left(\prod_{l=j+1}^nk'_l\right)^{-1}v'_j\right).
$$
Apply the permutation $(\sigma(2),\ldots,\sigma(n),\sigma(1))\mapsto (\sigma(1),\ldots,\sigma(n))$ to the above sum. Note that the sign of this permutation is $(-1)^{n-1}$. We thus obtain
$$
(-1)^n \sum_{\sigma\in S_n} \sign(\sigma)\prod_{p=1}^n H_{\sigma(p)}\left(u'^{-1}\left(\prod_{l=j+1}^nk'_l\right)^{-1}v'_j\right).
$$
We have thus proven the cyclic property.

\qed

\section{Deformation of higher orbital integrals and limit formula}\label{section-deformation-higher-orbital-integral}

Let $\GG$ be the deformation to the normal cone of the reductive group $G$, as constructed in Subsection~\ref{subsection-deformation}. Let $G_t$ be the fiber over $t\in \R$. Let $S_t=M_tA_tN_t$ be  a cuspidal parabolic subgroup of $G_t$. We have that $M_t$ is a reductive group and, if $K_M$ is a maximal compact subgroup of $M_t$ (note that $K_M$ does not depend on $t$), then $\rank K_M = \rank M_t$.

Recall that for an element $g\in G$, we can write
$$
g=\kappa(g)\mu(g)e^{H(g)}\mathbf{n}(g)$$
(see Subsection~\ref{subsection-review-orbital-integral-reductive}). Suppose we have an element $g_t\in G_t$ written as
$$
g_t=km_t\exp(tX)\exp(tY)$$
where $k\in K$, $m_t\in M_t$, $X\in \mathfrak{a}$, and $Y\in \mathfrak{n}$. Furthermore, let 
$$
m_t=k_M\exp(tX_M)\exp(tY_M),$$
under the Iwasawa decomposition of $M$. Under the deformation to the normal cone, we have 
$$
(km_t\exp(tX)\exp(tY),t)\rightarrow (kk_M, X_M+X+\phi(Y_M+Y),0)$$
as $t\rightarrow 0$. 

Note, however, that $H(g_t)=tX$, whose limit is zero as $t$ goes to zero. So we will modify this function.\footnote{Compare this with \cite[Lemma~3.13]{AfgoustidisContraction}.} Let $\{H_1,\ldots,H_n\}$ be an orthogonal basis for $\mathfrak{a}$. 
\begin{definition}
    For $t\neq 0$, define the map $H_t:G_t\rightarrow \mathfrak{a}$ by
    $$
    H_t(g)=t^{-1}H(g).$$
    Following \cite{ST2019higher}, we can write $H_t=(H_{1,t},\ldots,H_{n,t})$ where $H_{j,t}:G_t\rightarrow \R$ is defined by
    $$
    H_{j,t}(g)=\langle H_t(g),H_j\rangle.$$
\end{definition}
Note that if $g=km_t\exp(tX)\exp(tY)$ under the $KMAN$ decomposition, then $H_t(g)=X$.

In Definition~\ref{definition-H-components}, we defined maps $H_j:V\rightarrow \R$ for a general motion group $K\ltimes V$ where $V$ was an arbitrary Euclidean space. For the Cartan motion group $G_0=K\ltimes \mathfrak{p}$, we shall denote these maps as
$$
H_{j,0}:\mathfrak{p}\rightarrow \R$$
defined by
$$
H_{j,0}(X)=\langle X, H_j\rangle.$$

\begin{lemma}\label{H-function-limit-lemma}
    For any $Z\in \mathfrak{p}$, the following limit holds
    \begin{equation*}
        \lim_{t\rightarrow 0} H_{j,t}(k\exp(tZ)) = H_{j,0}(Z).
    \end{equation*} 
\end{lemma}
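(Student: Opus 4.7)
The plan is to reduce to a differential computation at the identity and then identify the resulting linear map with the orthogonal projection onto $\mathfrak{a}$ in $\mathfrak{p}$. Since the Iwasawa map $H\colon G\to\mathfrak{a}$ is left $K$-invariant (the factor $\kappa(g)$ absorbs any left $K$-action),
\[
H_{j,t}(k\exp(tZ)) \;=\; t^{-1}\bigl\langle H(\exp(tZ)),\, H_j\bigr\rangle,
\]
and since $H_j\in\mathfrak{a}$, it suffices to prove $\lim_{t\to 0} t^{-1} H(\exp(tZ)) = \pi_{\mathfrak{a}}(Z)$, where $\pi_{\mathfrak{a}}\colon \mathfrak{p}\to\mathfrak{a}$ denotes orthogonal projection; pairing with $H_j$ then yields $\langle Z,H_j\rangle = H_{j,0}(Z)$.

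Next, I would use smoothness of the $KMAN$ decomposition near the identity to write $\exp(tZ) = \kappa(t)\mu(t)\exp(H(t))\mathbf{n}(t)$ with smooth curves $\kappa(t)\in K$, $\mu(t)\in M$, $H(t)\in\mathfrak{a}$, $\mathbf{n}(t)\in N$, all trivial at $t=0$. Differentiating at $t=0$ and splitting $\dot\mu(0) = \dot\mu(0)_{\mathfrak{k}_M} + \dot\mu(0)_{\mathfrak{m}_p}$ under the Cartan decomposition $\mathfrak{m} = \mathfrak{k}_M\oplus\mathfrak{m}_p$ of $\mathfrak{m}$, one obtains
\[
Z \;=\; \bigl(\dot\kappa(0)+\dot\mu(0)_{\mathfrak{k}_M}\bigr)\;+\;\dot\mu(0)_{\mathfrak{m}_p}\;+\;\dot H(0)\;+\;\dot{\mathbf n}(0)
\]
in the direct-sum decomposition $\mathfrak{g} = \mathfrak{k}\oplus\mathfrak{m}_p\oplus\mathfrak{a}\oplus\mathfrak{n}$. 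Thus $\lim_{t\to 0} t^{-1}H(\exp(tZ)) = \dot H(0)$ is precisely the $\mathfrak{a}$-component of $Z$ in this Iwasawa-style splitting.

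The remaining and main step is to verify that, for $Z\in\mathfrak{p}$, this Iwasawa $\mathfrak{a}$-component agrees with $\pi_{\mathfrak{a}}(Z)$. Standard Killing-form orthogonality -- using that $\mathfrak{m}\oplus\mathfrak{a}$ is the zero restricted-root space for $\mathfrak{a}$ while $\mathfrak{n}$ is a sum of nonzero restricted root spaces -- gives the orthogonal decomposition
\[
\mathfrak{p} \;=\; \mathfrak{m}_p\oplus\mathfrak{a}\oplus\phi(\mathfrak{n}),
\]
so it suffices to check that each of $\mathfrak{m}_p$ and $\phi(\mathfrak{n})$ has zero $\mathfrak{a}$-component in $\mathfrak{g} = \mathfrak{k}\oplus\mathfrak{m}_p\oplus\mathfrak{a}\oplus\mathfrak{n}$. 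The $\mathfrak{m}_p$ case is immediate. For $\phi(Y)=\tfrac12(Y-\theta Y)$ with $Y\in\mathfrak{n}$, note that $-\theta Y$ lies in $\bigoplus_{\alpha>0}\mathfrak{g}_{-\alpha}$, and each $X_{-\alpha}\in\mathfrak{g}_{-\alpha}$ rewrites as
\[
X_{-\alpha} \;=\; (X_{-\alpha}+\theta X_{-\alpha}) \;-\; \theta X_{-\alpha},
\]
with the first summand in $\mathfrak{k}$ and the second in $\mathfrak{g}_\alpha\subset\mathfrak{n}$. Therefore $-\theta Y\in\mathfrak{k}\oplus\mathfrak{n}$, hence $\phi(Y)\in\mathfrak{k}\oplus\mathfrak{n}$, and its $\mathfrak{a}$-component vanishes.

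Combining these pieces yields $\lim_{t\to 0} t^{-1}H(\exp(tZ)) = \pi_{\mathfrak{a}}(Z)$, proving the lemma. The principal obstacle is this final step -- identifying the Iwasawa $\mathfrak{a}$-component with the orthogonal projection on $\mathfrak{p}$; once that root-space computation is in place, the $K$-invariance reduction and the Taylor-expansion differentiation are routine.
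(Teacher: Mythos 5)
Your proof is correct, and it takes a genuinely different route from the paper's. The paper argues via the deformation-to-the-normal-cone groupoid $\GG$: for $t\neq 0$ it writes $k\exp(tZ)=k'm_t\exp(tX)\exp(tY)$ in the $KMAN$ decomposition, observes that both this expression and $k\exp(tZ)$ give the same point of $\GG$, and then invokes the earlier limit lemma for $(k\exp(tX)\exp(tY),t)\to (k,X+\phi(Y),0)$ together with the Hausdorff property of $\GG$ to conclude $Z=X_M+X+\phi(Y_M+Y)$, after which the result follows because $X_M\in\mathfrak{p}_M$ and $\phi(Y_M+Y)$ lie in $\mathfrak{a}^\perp$ (the content of Lemma~\ref{Lie-algebra-complement}). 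You instead reduce by left $K$-invariance of $H$ to $t^{-1}H(\exp(tZ))$, differentiate the smooth $KMAN$ decomposition at the identity to obtain $Z = (\dot\kappa(0)+\dot\mu(0)_{\mathfrak{k}_M}) + \dot\mu(0)_{\mathfrak{m}_p} + \dot H(0) + \dot{\mathbf n}(0)$ in $\mathfrak{g}=\mathfrak{k}\oplus\mathfrak{m}_p\oplus\mathfrak{a}\oplus\mathfrak{n}$, and then show by restricted-root space analysis that for $Z\in\mathfrak{p}$ the $\mathfrak{a}$-component in this Iwasawa-type splitting coincides with the orthogonal projection $\pi_{\mathfrak{a}}(Z)$; the crucial computation that $\phi(\mathfrak{n})\subset\mathfrak{k}\oplus\mathfrak{n}$ is exactly the algebraic heart of the paper's Lemma~\ref{Lie-algebra-complement}, just phrased differently. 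Your approach buys self-containedness: it works purely at the Lie-algebra level and does not rely on the smooth structure of $\GG$, the Hausdorff argument, or the preceding unnumbered limit lemma, whereas the paper's argument is more economical once that machinery is in place and dovetails with the groupoid framework being developed. Both proofs are sound; yours fills in the convergence of the $t$-dependent decomposition data that the paper handles implicitly.
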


\begin{proof}
    For each $t\neq 0$, let $k\exp(tZ)=k'm_t\exp(tX)\exp(tY)$, where $k'\in K$, $m_t\in M_t$, $X\in \mathfrak{a}$, and $Y\in \mathfrak{n}$. Furthermore, let $m_t=k_M\exp(tX_M)\exp(tY_M)$ under the Iwasawa decomposition. As $t\rightarrow 0$, we obtain
    \begin{align*}
        (k\exp(tZ),t) & \rightarrow (k,Z,0)\\
        (k'm_t\exp(tX)\exp(tY),t)&\rightarrow (kk_M,X_M+X+\phi(Y_M+Y),0).
    \end{align*}
    Since the topology of the deformation to the normal cone is Hausdorff, we must have
    $$
    Z=X_M+X+\phi(Y_M+Y).$$
    Note that the image of $\phi$ is in the orthogonal complement of $\mathfrak{a}$, and thus we have
    \begin{align*}
        H_{j,t}(km_t\exp(tX)\exp(tY))&=\langle X,H_j\rangle\\
        H_{j,0}(Z)& = \langle X,H_j\rangle.
    \end{align*}
    Now it is clear that the limit holds. 
\end{proof}

\subsection{Cyclic Cocycle on a regular element in $M$}

For $t\neq 0$, let $P_t=M_tA_tN_t$ be a cuspidal parabolic subgroup of $G_t$, together with $\mathfrak{a}$ and $\mathfrak{n}$ as the Lie algebras of $A_t$ and $N_t$, respectively (the Lie algebras do not depend on $t$). Recall that $M_tA_t=Z_{G_t}(\mathfrak{a})$. Let $\mathfrak{m}$ be the Lie algebra of $M$ and write $\mathfrak{m}=\mathfrak{k}_M\oplus \mathfrak{p}_M$ as its Cartan decomposition. We also consider the Iwasawa decomposition $M_t=K_MA_{M,t}N_{M,t}$, and let $\mathfrak{a}_M$ and $\mathfrak{n}_M$ be the Lie algebras of $A_{M,t}$ and $N_{M,t}$, respectively. It is important to note that $K_M=K\cap M$. See \cite[Subsection~VII.7]{KnappBeyond} for details.
\begin{lemma}\label{Lie-algebra-complement}
    Let $\mathfrak{a}^\perp$ be the orthogonal complement of $\mathfrak{a}$ in $\mathfrak{p}$. Then
    $$
    \mathfrak{a}^\perp=\mathfrak{p}_M\oplus \phi(\mathfrak{n}).$$
\end{lemma}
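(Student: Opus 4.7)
The plan is to decompose $\mathfrak{p}$ as an orthogonal direct sum in two compatible ways and then read off $\mathfrak{a}^\perp$ by subtraction. The key ingredient is that the map $\phi$ restricts to an isomorphism $\mathfrak{a}_s \oplus \mathfrak{n}_s \xrightarrow{\sim} \mathfrak{p}$, which was already noted in the discussion of \eqref{eq:phi}, together with compatibility between the Iwasawa decomposition of $G$ and that of $M$.

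First I would record the structure of the Langlands decomposition. Since $MA = Z_G(\mathfrak{a})$ and $\mathfrak{a}$ is abelian, one has the orthogonal splittings
\begin{equation*}
    \mathfrak{a}_s = \mathfrak{a}_M \oplus \mathfrak{a}, \qquad \mathfrak{n}_s = \mathfrak{n}_M \oplus \mathfrak{n},
\end{equation*}
where the second splitting comes from grouping positive restricted roots according to whether they vanish on $\mathfrak{a}$. Since $\mathfrak{m}$ is $\theta$-stable and $\theta$ serves as the Cartan involution of $\mathfrak{m}$ as well, the map $\phi$ restricts to the analogous map for $M$.

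Next I would transfer each Iwasawa-type decomposition to the Cartan side via $\phi$. Because $\phi$ is the identity on $\mathfrak{a}_s$ and is an isomorphism from $\mathfrak{a}_s\oplus\mathfrak{n}_s$ onto $\mathfrak{p}$, applying it to $\mathfrak{g}$ and to $\mathfrak{m}$ yields
\begin{equation*}
    \mathfrak{p} = \mathfrak{a}_s \oplus \phi(\mathfrak{n}_s), \qquad \mathfrak{p}_M = \mathfrak{a}_M \oplus \phi(\mathfrak{n}_M).
\end{equation*}
Substituting the decompositions of $\mathfrak{a}_s$ and $\mathfrak{n}_s$ into the first identity and regrouping then gives
\begin{equation*}
    \mathfrak{p} = \mathfrak{a} \oplus \bigl(\mathfrak{a}_M \oplus \phi(\mathfrak{n}_M)\bigr) \oplus \phi(\mathfrak{n}) = \mathfrak{a} \oplus \mathfrak{p}_M \oplus \phi(\mathfrak{n}).
\end{equation*}

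Finally, I must verify orthogonality so that I can identify $\mathfrak{a}^\perp$. Using the Killing form on $\mathfrak{p}$ (which is positive definite and coincides with the chosen inner product up to normalization), restricted root spaces for distinct roots are orthogonal. Consequently $\mathfrak{a}_s\perp \phi(\mathfrak{n}_s)$, since for $H\in\mathfrak{a}_s$ and $Y\in\mathfrak{n}_s$ both $B(H,Y)$ and $B(H,\theta Y)$ vanish by root space orthogonality. Combined with the orthogonality $\mathfrak{a}\perp\mathfrak{a}_M$ inside $\mathfrak{a}_s$, this shows $\mathfrak{a}$ is orthogonal to each of $\mathfrak{p}_M$ and $\phi(\mathfrak{n})$. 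A dimension count then concludes $\mathfrak{a}^\perp = \mathfrak{p}_M\oplus \phi(\mathfrak{n})$.

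The only mildly subtle step is the orthogonality argument: one has to recognize that the natural inner product on $\mathfrak{p}$ coming from $B_\theta(X,Y)=-B(X,\theta Y)$ reduces on $\mathfrak{p}$ to $B$ itself (since $\theta=-\mathrm{Id}$ on $\mathfrak{p}$), so the restricted root space orthogonality applies directly. Once this is in place the rest is a matching of Iwasawa pieces for $G$ and $M$, and no further computation is required.
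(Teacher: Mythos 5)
Your proof is correct and follows essentially the same route as the paper's: split $\mathfrak{a}_s = \mathfrak{a}_M\oplus\mathfrak{a}$ and $\mathfrak{n}_s=\mathfrak{n}_M\oplus\mathfrak{n}$, transfer through $\phi$, and identify $\mathfrak{p}_M = \mathfrak{a}_M\oplus\phi(\mathfrak{n}_M)$. The one place you go further than the paper is in explicitly verifying the orthogonality $\mathfrak{a}\perp\bigl(\mathfrak{p}_M\oplus\phi(\mathfrak{n})\bigr)$ via restricted root space orthogonality of the Killing form; the paper compresses this into ``by dimensional consideration,'' implicitly relying on $\phi(\mathfrak{n}_s)=\mathfrak{a}_s^\perp$ and $\mathfrak{a}\perp\mathfrak{a}_M$, so your version fills in a small but genuine gap in the exposition without changing the underlying argument.
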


\begin{proof} Let $\mathfrak{m}_s\oplus\mathfrak{a}_s\oplus\mathfrak{n}_s$ be a minimal parabolic subalgebra of $\mathfrak{g}$.
    Recall that $\phi(\mathfrak{n}_s)=\mathfrak{a}_s^\perp$, the orthogonal complement of $\mathfrak{a}_s$ in $\mathfrak{p}$. Furthermore, we have $\phi(\mathfrak{n}_M)=\mathfrak{a}_M^\perp$, the orthogonal complement of $\mathfrak{a}_M$ in $\mathfrak{p}_M$. Now we have
    \begin{align*}
        \mathfrak{p}\cong & \mathfrak{a}_s\oplus\mathfrak{n}_s\\
        \mathfrak{a}_s = & \mathfrak{a}_M\oplus \mathfrak{a}\\
        \mathfrak{n}_s=&\mathfrak{n}_M\oplus\mathfrak{n}.
    \end{align*}
    This implies that 
    $$
    \mathfrak{p}\cong \mathfrak{a}_M\oplus\mathfrak{a}\oplus\mathfrak{n}_M\oplus\mathfrak{n}.$$
    By dimensional consideration, we have
    \begin{equation}\label{decomposition}\mathfrak{a}^\perp \cong \mathfrak{a}_M\oplus\mathfrak{n}_M\oplus\mathfrak{n}.\end{equation}
    Note that $\mathfrak{p}_M\cong \mathfrak{a}_M\oplus\mathfrak{n}_M.$ Apply the map $\phi$ to the right hand side of \eqref{decomposition} to obtain
    $$
    \mathfrak{a}^\perp =\mathfrak{p}_M\oplus\phi(\mathfrak{n}).$$
\end{proof}

Let $T$ be a compact Cartan subgroup of $M_t$. Let $x\in M_t$ be a regular element in $M_t$. We notice \cite[Theorem~7.108]{KnappBeyond} that $x$ is in just one Cartan subgroup of $M_t$. Thus, from now on, we can assume the regular\footnote{For further details about regular elements, see \cite[Section~VII.8]{KnappBeyond}.} element $x$ is in $T$. Furthermore, we assume that $Z_{M_t}(x)=T$, and that $\operatorname{Vol}(T)=1$. 

\begin{remark}\label{remark-on-regular-elements}
    It should be noted that a regular element in $M_t$ is not necessarily a regular element in $G_t$. Furthermore, in general, it is not always the case that if $x$ is a regular element in a general reductive group, then $Z_G(x)$ is a Cartan subgroup. See for example the discussion following Equation~(7.95) in \cite[Section~VII.8]{KnappBeyond}.
\end{remark}

Let $f_0,f_1,\ldots,f_n\in C^\infty_c(\GG)$ and define $f_{j,t}\in C^\infty_c(G_t)$ as 
\begin{align*}
    f_{j,t}(g) = & f_j(g,t) & t\neq 0\\
    f_{j,0}(k,X) = & f_j(k,X,0) & t=0.
\end{align*}
Let $x\in T$ be a regular element. We define the higher orbital integrals for the fibers $G_t$, $t\neq 0$, over the regular element $x$ as
\begin{align}
    \label{higher-orbital-regular} \Phi_{P_t,x,t}&(f_{0,t},f_{1,t},\ldots,f_{n,t}) = \int_{h\in M_t/Z_{M_t}(x)} \int_{KN_t} \int_{G_t^n} f_{0,t} (khxh^{-1}nk^{-1}g_n^{-1}\cdots g_1^{-1})\\
   \notag &f_{1,t}(g_1)\cdots f_{n,t}(g_n) \Biggl[\sum_{\sigma\in S_n} \sign(\sigma) H_{\sigma(1),t}(g_1\cdots g_n k)\cdots H_{\sigma(n),t}(g_n k)\Biggr]\\
   \notag & \qquad d_tg_1\cdots d_tg_n dkd_tn\;d_t[h].
\end{align}

\begin{theorem}\label{thm:limit}
    Let $x\in T$ be a regular element in $M_t$ for all $t\neq 0$. The following limit holds .
    $$
    \lim_{t\rightarrow 0} \Phi_{P_t,x,t}(f_{0,t},f_{1,t},\ldots,f_{n,t})=\det\nolimits_{\mathfrak{p}_M}(\Ad_x-\Id)^{-1} J_\phi^{-1}\tau_{\mathfrak{a},x}(f_{0,0},f_{1,0},\ldots,f_{n,0})
    $$
\end{theorem}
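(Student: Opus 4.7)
The plan is to rewrite every integration variable in $\Phi_{P_t,x,t}$ using exponential coordinates compatible with the smooth structure of $\mathbb{G}$, pass the limit $t\to 0$ under the integral sign, and perform a linear change of variables on the Lie-algebra slices to recognize the outcome as $\tau_{\mathfrak{a},x}$ times the asserted prefactors. Concretely I parameterize $h = k_h\exp(tZ_h)$ using the Cartan decomposition of $M_t$ (which descends to $M_t/T$ because the integrand is right-$T$-invariant and $\operatorname{Vol}(T)=1$), $g_j = k_j\exp(tZ_j)$ using the Cartan decomposition of $G_t$, and $n=\exp(tY_n)$ for $Y_n\in\mathfrak{n}$. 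Under these substitutions the measures become $J_M(tZ_h)\,dk_h\,dZ_h$, $J(tZ_j)\,dk_j\,dZ_j$ and $dY_n$, with Jacobians tending to $1$ by Lemma \ref{lemma-jacobian-limit}.

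For the pointwise limit of the integrand I use that in $G_0 = K\ltimes\mathfrak{p}$ each factor has a direct limit: $h\to(k_h, Z_h)$, $n\to(e,\phi(Y_n))$, $g_j\to(k_j, Z_j)$, and then apply the $G_0$ group law $(k_0,v_0)(k_1,v_1)=(k_0k_1,\Ad_{k_1^{-1}}v_0+v_1)$ to assemble the product. A short direct calculation gives $hxh^{-1}\to(k_h x k_h^{-1}, V_h)$ with $V_h := \Ad_{k_h}(\Ad_{x^{-1}}-\Id)Z_h\in\mathfrak{p}_M$, and combining with $k,n,k^{-1}$ and the reversed chain $g_n^{-1}\cdots g_1^{-1}$, the argument of $f_{0,t}$ converges to
\[
\Bigl(k\,k_h x k_h^{-1} k^{-1} k_n^{-1}\cdots k_1^{-1},\; \Ad_{k_1\cdots k_n k}\bigl(V_h+\phi(Y_n)\bigr) - \sum_{j=1}^n \Ad_{k_1\cdots k_j}Z_j\Bigr).
\]
For the $H$-factor a short BCH expansion yields $g_j\cdots g_n k = (k_j\cdots k_n k)\exp(tW_j + O(t^2))$ with $W_j = \sum_{i=j}^n\Ad_{(k_{i+1}\cdots k_n k)^{-1}}Z_i\in\mathfrak{p}$, so by Lemma \ref{H-function-limit-lemma} each $H_{\sigma(j),t}(g_j\cdots g_n k)$ converges to $H_{\sigma(j),0}(W_j)$. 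Antisymmetrizing in $\sigma\in S_n$ turns $\prod_j H_{\sigma(j),0}(W_j)$ into a determinant, and the elementary column operation $\mathrm{col}_j\mapsto\mathrm{col}_j-\mathrm{col}_{j+1}$ collapses each partial sum $W_j$ to its leading summand $\Ad_{(k_{j+1}\cdots k_n k)^{-1}}Z_j$, reproducing exactly the $H$-factor of $\tau_{\mathfrak{a},x}$.

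Two linear changes of variables then produce the claimed prefactors. Because $x\in T$ is regular in $M_t$, $\Ad_{x^{-1}} - \Id$ is invertible on $\mathfrak{p}_M$, and the map $Z_h\mapsto V_h$ is a linear automorphism of $\mathfrak{p}_M$ with Jacobian $|\det_{\mathfrak{p}_M}(\Ad_x - \Id)|$; this contributes the factor $\det_{\mathfrak{p}_M}(\Ad_x-\Id)^{-1}$. Using the orthogonal decomposition $\mathfrak{a}^\perp = \mathfrak{p}_M \oplus \phi(\mathfrak{n})$ of Lemma \ref{Lie-algebra-complement}, I then combine $V_h\in\mathfrak{p}_M$ and $\phi(Y_n)\in\phi(\mathfrak{n})$ into the single variable $w := V_h+\phi(Y_n)\in\mathfrak{a}^\perp$; the substitution $Y_n\mapsto\phi(Y_n)$ contributes $J_\phi^{-1}$ via \eqref{integral-Lie-algebra-diffeo}. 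After these substitutions the integral matches $\tau_{\mathfrak{a},x}(f_{0,0},\ldots,f_{n,0})$ term by term, yielding the claimed formula.

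The main technical obstacle is justifying the interchange of limit and integration. The $K$-factors are compact but $Z_h$, $Y_n$ and the $Z_j$ range over noncompact Euclidean spaces, and the $H$-factors grow linearly. Since each $f_j \in C^\infty_c(\mathbb{G})$, its lift $\tilde f_j(k,Z,t) = f_j(k\exp(tZ),t)$ has compact support in $(k,Z)$ uniformly for $t$ in a compact neighborhood of $0$, and the compact support of $f_0$ applied to the long product argument similarly controls $V_h + \phi(Y_n)$ and hence, via the orthogonal decomposition and invertibility of $\Ad_{x^{-1}}-\Id$ on $\mathfrak{p}_M$, both $Z_h$ and $Y_n$. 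Combined with the polynomial growth of the $H$-factors this yields a uniform $L^1$ dominating function, enabling dominated convergence.
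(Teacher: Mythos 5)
Your proof is correct and follows essentially the same route as the paper: exponential coordinates $g_j=k_j\exp(tZ_j)$, $n=\exp(tY_n)$ compatible with the deformation $\GG$, pointwise limits of each factor of the argument of $f_{0,t}$ via BCH and the $G_0$ group law, Lemma~\ref{H-function-limit-lemma} for the $H$-factors, the linear change of variables on $\mathfrak{p}_M$ to produce $\det_{\mathfrak{p}_M}(\Ad_x-\Id)^{-1}$, Lemma~\ref{Lie-algebra-complement} together with \eqref{integral-Lie-algebra-diffeo} to assemble $\mathfrak{a}^\perp$ and produce $J_\phi^{-1}$. Two genuine differences are worth highlighting. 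First, you handle the $h$-integral over $M_t/T$ directly via the Cartan decomposition $h=k_h\exp(tZ_h)$ of $M_t$ and compute the limit of $hxh^{-1}$ by hand, whereas the paper delegates exactly this step to the orbital-integral convergence theorem of \cite{song2022cartan} (quoted in \S\ref{subsection-deformation}); the two are equivalent after a change of variables, and your version is more self-contained. Second -- and this is the more valuable contribution -- you make explicit that the raw pointwise limit of the $H$-factor at index $j$ is $H_{\sigma(j),0}(W_j)$ with the partial sum $W_j=\sum_{i=j}^n\Ad_{(k_{i+1}\cdots k_nk)^{-1}}Z_i$, and that the antisymmetrization over $S_n$ is required to reduce each $W_j$ to its single leading term via column operations on the determinant $\det\bigl[H_i(W_j)\bigr]_{i,j}$ (using $W_j-W_{j+1}=\Ad_{(k_{j+1}\cdots k_nk)^{-1}}Z_j$). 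This step is necessary to match Definition~\ref{regular-general-rank-definition}, which has a single term $u^{-1}(\prod_{l=j+1}^nk_l)^{-1}v_j$ inside each $H_{\sigma(j)}$, yet the paper's write-up passes silently from the partial-sum form to the single-term form; you correctly fill that gap. You also address the interchange of limit and integral via a dominated-convergence estimate (compact supports of $\tilde f_j$ uniform in $t$ near $0$, orthogonality of $\mathfrak{p}_M$ and $\phi(\mathfrak{n})$ inside $\mathfrak{a}^\perp$, invertibility of $\Ad_{x^{-1}}-\Id$ on $\mathfrak{p}_M$), which the paper does not discuss but is needed. One small bookkeeping point: the Jacobian of $Z_h\mapsto V_h=\Ad_{k_h}(\Ad_{x^{-1}}-\Id)Z_h$ is $|\det_{\mathfrak{p}_M}(\Ad_{x^{-1}}-\Id)|$; it equals $|\det_{\mathfrak{p}_M}(\Ad_x-\Id)|$ because $\Ad_{x^{-1}}-\Id=-\Ad_{x^{-1}}(\Ad_x-\Id)$ and $|\det\Ad_{x^{-1}}|=1$, so your stated factor is right, but it is worth spelling out this identity when you write it up.
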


\begin{proof}
    By Cartan decomposition and change of variables, we obtain 
    \begin{align*}
        \Phi_{P_t,x,t}(&f_{0,t},f_{1,t},\ldots,f_{n,t})=\int_{h\in M_t} \int_{k\in K} \int_{Y\in \mathfrak{n}} \int_{K^n}\int_{\mathfrak{p}^n} \\
        & f_{0,t}\Bigl(khxh^{-1}\exp(tY)k^{-1}\exp(-tX_n)k_n^{-1}\cdots \exp(-tX_1)k_1^{-1}\Bigr)\\
        & f_{1,t}\Bigl(k_1\exp(tX_1)\Bigr)\cdots f_{n,t}\Bigl(k_n\exp(tX_n)\Bigr)J(tX_1)\cdots J(tX_n)\\
        &\Biggl[\sum_{\sigma\in S_n} \sign(\sigma) H_{\sigma(1),t}\biggl(k_1\exp(tX_1)\cdots k_n\exp(tX_n)k\biggl)\cdots H_{\sigma(n),t}\biggl(k_n\exp(tX_n)k\biggr)\Biggr]\\
        & d_th\;dk\;dY\;dk_1\cdots dk_n\;dX_1\cdots dX_n
    \end{align*}
As we take the limit $t\rightarrow 0$, we make the following four observations.

\noindent\textbf{Observation 1} \begin{align*}
        & H_{\sigma(j),t}\biggl(u_j\exp(tX_j)\cdots u_n\exp(tX_n)k\biggr)\\
        = & H_{\sigma(j),t}\biggl(u_j\cdots u_nk\exp(t\Ad_{k^{-1}u_n^{-1}\cdots u_{j+1}^{-1}}X_j)\cdots \exp(t\Ad_{k^{-1}}X_n) \biggr)\\
        = & H_{\sigma(j),t}\Biggl(u_j\cdots u_nk\exp\biggl(t\big(\Ad_{k^{-1}u_n^{-1}\cdots u_{j+1}^{-1}}X_j+\cdots+\Ad_{k^{-1}}X_n+O(t)\big)\biggr)\Biggr)\\
        \rightarrow & H_{\sigma(j),0}\biggl(\Ad_{k^{-1}u_n^{-1}\cdots u_{j+1}}X_j+\cdots+\Ad_{k^{-1}}X_n\biggr)
        \end{align*}
        The last limit is due to Lemma~\ref{H-function-limit-lemma}.
    
    \noindent\textbf{Observation 2} For $j=1,\ldots,n$, we have
        $$f_{j,t}\big(u_j\exp(tX_j)\big)\rightarrow f_{j,0}(u_j,X_j).$$

\noindent\textbf{Observation 3}
\begin{align}
\notag & f_{0,t}\big(khxh^{-1}\exp(tY)k^{-1}\exp(-tX_n)k_n^{-1}\cdots \exp(-tX_1)k_1^{-1}\big)     \\
  \label{function-zero}  = & f_{0,t}\big(khxk^{-1}k_n^{-1}\cdots k_1^{-1}\exp(t\Ad_{k_1\cdots k_n k}Y)\exp(-t\Ad_{k_1\cdots k_n}X_n)\cdots \exp(-t\Ad_{k_1}X_1)\big)
\end{align}
Let 
        \begin{align*}
            Y_1 & =\frac{1}{2}(\Ad_{u_1\cdots u_nk}Y-\theta\Ad_{u_1\cdots u_nk}Y)  \in  \mathfrak{k}\\
            Y_2 & = \phi(\Ad_{u_1\cdots u_nk}Y)  \in \mathfrak{p}.
        \end{align*}
        Then by the Baker-Campbell-Hausdorff formula, the function in \eqref{function-zero} becomes 
\begin{align*}
    & f_{0,t}\Big(khxh^{-1}k^{-1}k_n^{-1}\cdots k_1^{-1}\exp\big(t(Y_1+Y_2)\big)\exp\big(-t\Ad_{k_1\cdots k_n}X_n\big)\cdots \exp(-t\Ad_{k_1}X_1)\Big)\\
    = & f_{0,t}\Big( khxh^{-1}k^{-1}k_n^{-1}\cdots k_1^{-1} \exp(tY_1)\exp\big(-tY_1\big)\exp\big(t(Y_1+Y_2)\big)\\
    & \qquad\exp\big(-t\Ad_{k_1\cdots k_n}X_n\big)\cdots \exp\big(-t\Ad_{k_1}X_1\big)\Big)\\
    = & f_{0,t}\Biggl(khxh^{-1}k^{-1}k_n^{-1}\cdots k_1^{-1}\exp(tY_1)\\
    & \qquad \exp\Big(t\big(Y_2-\Ad_{k_1\cdots k_n}X_n-\cdots -\Ad_{k_1}X_1+O(t)\big)\Big)\Biggr).
\end{align*}
Let $h\in M_t$ ($t\neq 0)$ be given as $h=u\exp(tv)$ where $u\in K_M$ and $v\in \mathfrak{p}_M$. Define
\begin{align*}
F\big(u\exp(tv),t\big)=&f_{0,t}\Biggl(k\exp(tv)k^{-1}k_n^{-1}\cdots k_1^{-1}\exp(tY_1)\\
& \quad\exp\Big(t\big(Y_2-\Ad_{k_1\cdots k_n}X_n-\cdots - \Ad_{k_1}X_1+O(t)\big)\Big)\Biggl).    
\end{align*}
Note that 
\begin{align*}
    &f_{0,t}\Biggl(ku\exp(tv)k^{-1}k_n^{-1}\cdots k_1^{-1}\exp(tY_1)\\
& \quad\exp\Big(t\big(Y_2-\Ad_{k_1\cdots k_n}X_n-\cdots - \Ad_{k_1}X_1+O(t)\big)\Big)\Biggr)\\
= & f_{0,t}\Biggl(kuk^{-1}k_n^{-1}\cdots k_1^{-1}\exp(t\Ad_{k_1\cdots k_nk}v)\exp(tY_1)\\
& \quad\exp\Big(t\big(Y_2-\Ad_{k_1\cdots k_n}X_n-\cdots - \Ad_{k_1}X_1+O(t)\big)\Big)\Biggl)\\
= & f_{0,t}\Biggl(kuk^{-1}k_n^{-1}\cdots k_1^{-1}\exp(tY_1)\exp\Big(t\big(\Ad_{k_1\cdots k_nk}v+Y_2\\
& \qquad -\Ad_{k_1\cdots k_n}X_n-\cdots -\Ad_{k_1}X_1+O(t)\big)\Big)\Biggr)\\
\rightarrow & f_{0,0}\Big(kuk^{-1}k_n^{-1}\cdots k_1^{-1},\Ad_{k_1\cdots k_nk}v+Y_2-\Ad_{k_1\cdots k_n}X_n-\cdots - \Ad_{k_1}X_1\Big),
\end{align*}
as we take the limit $t\rightarrow 0$. So we shall define
\begin{align*}
F(u,v,0)=&f_{0,0}(kuk^{-1}k_n^{-1}\cdots k_1^{-1},\Ad_{k_1\cdots k_nk}v+Y_2\\
& \qquad -\Ad_{k_1\cdots k_n}X_n-\cdots - \Ad_{k_1}X_1).
\end{align*}
By Theorem~\ref{integral-KAK-decomposition}, as $t\rightarrow 0$, we obtain
\begin{align*}
    \int_{M_t} F(hxh^{-1},t)\;d_th\rightarrow & \int_{u\in K_M}\int_{v\in \mathfrak{p}_M} F(uxu^{-1},\Ad_{uxu^{-1}}v-v,0)\;du\;dv\\
    \rightarrow & \int_{u\in K_M}\int_{v\in\mathfrak{p}_M} f_{0,0}\Big(kuxu^{-1}k_n^{-1}\cdots k_1^{-1},\Ad_{k_1\cdots k_nk}(\Ad_{uxu^{-1}}v-v)\\
    & \qquad +Y_2-\Ad_{k_1\cdots k_n}X_n-\cdots \Ad_{k_1}X_1\Big).
\end{align*}

\noindent\textbf{Observation 4}
Recall from Lemma~\ref{lemma-jacobian-limit} that $J(tX_j)\rightarrow 1$ as $t\rightarrow 0$.

Combining the four observations, as $t\rightarrow 0$, we obtain
\begin{align*}
        \Phi_{P_t,x,t}&(f_{0,t},f_{1,t},\ldots,f_{n,t})\rightarrow \int_{u\in K_M}\int_{v\in \mathfrak{p}_M}\int_{k\in K}\int_{Y\in \mathfrak{n}}\int_{K^n}\int_{\mathfrak{p}^n} \\
        &f_{0,0}\Big(kuxu^{-1}k_n^{-1}\cdots k_1^{-1},\Ad_{k_1\cdots k_nk}(\Ad_{uxu^{-1}}v-v)+\phi(\Ad_{k_1\cdots k_nk}Y)\\
        & -\Ad_{k_1\cdots k_n} X_n-\cdots -\Ad_{k_1}X_1\Big)f_{1,0}(k_1,X_1)\cdots f_{n,0}(k_n,X_n)\\
        &\Biggl[\sum_{\sigma\in S_n} \sign(\sigma) H_{\sigma(1),0}(\Ad_{k^{-1}k_n^{-1}\cdots k_2^{-1}}X_1+\cdots +\Ad_{k^{-1}} X_n)\cdots H_{\sigma(n),0}(\Ad_{k^{-1}}X_n)  \Biggr].
\end{align*}
    We make the following change of variables $$v'=\Ad_{uxu^{-1}}v-v\quad \Rightarrow \quad dv'=\det\nolimits_{\mathfrak{p}_M}(\Id-\Ad_{uxu^{-1}})dv,$$ and we also use \eqref{integral-Lie-algebra-diffeo} to obtain
    \begin{align*}
        &\int_{u\in K_M}\int_{k\in K}\int_{v'\in \mathfrak{p}_M}\int_{w'\in \phi(\mathfrak{n})}\int_{K^n}\int_{\mathfrak{p}^n} \det\nolimits_{\mathfrak{p}_M}(\Ad_{uxu^{-1}}-\Id)\\
        & f_{0,0}\Big(kuxu^{-1}k_n^{-1}\cdots k_1^{-1},\Ad_{k_1\cdots k_nk}v'+\Ad_{k_1\cdots k_n k}w'\\
        &\qquad -\Ad_{k_1\cdots k_n}X_n-\cdots -\Ad_{k_1}X_1\Big) \prod_{j=1}^n f_{j,0}(k_j,X_j)J_\phi^{-1}\\
        & \Biggl[\sum_{\sigma\in S_n} \sign(\sigma) H_{\sigma(1),0}(\Ad_{k^{-1}k_n^{-1}\cdots k_2^{-1}}X_1+\cdots +\Ad_{k^{-1}} X_n)\cdots H_{\sigma(n),0}(\Ad_{k^{-1}}X_n)  \Biggr].
    \end{align*}
Using Lemma~\ref{Lie-algebra-complement}, we obtain

\begin{align*}
        &J_\phi^{-1}\int_{u\in K_M}\int_{k\in K}\int_{w\in\mathfrak{a}^\perp}\int_{K^n}\int_{\mathfrak{p}^n} \det\nolimits_{\mathfrak{p}_M}(\Ad_{uxu^{-1}}-\Id)\\
        & f_{0,0}\Big(kuxu^{-1}k_n^{-1}\cdots k_1^{-1},\Ad_{k_1\cdots k_n k}w\\
        &\qquad -\Ad_{k_1\cdots k_n}X_n-\cdots -\Ad_{k_1}X_1\Big) \prod_{j=1}^n f_{j,0}(k_j,X_j)\\
        & \Biggl[\sum_{\sigma\in S_n} \sign(\sigma) H_{\sigma(1),0}(\Ad_{k^{-1}k_n^{-1}\cdots k_2^{-1}}X_1+\cdots +\Ad_{k^{-1}} X_n)\cdots H_{\sigma(n),0}(\Ad_{k^{-1}}X_n)  \Biggr].
    \end{align*}
To finish the proof, we make two claims.

\noindent\textbf{Claim 1} $\det\nolimits_{\mathfrak{p}_M}(\Ad_{uxu^{-1}}-\Id)=\det\nolimits_{\mathfrak{p}_M}(\Ad_x-\Id)$.\\
Indeed
\begin{align*}
    \det\nolimits_{\mathfrak{p}_M}(\Ad_{uxu^{-1}}-\Id)=&\det\nolimits_{\mathfrak{p}_M}(\Ad_{uxu^{-1}}-\Ad_{uu^{-1}})\\
    = & \det\nolimits_{\mathfrak{p}_M}(\Ad_u(\Ad_x-\Id)\Ad_{u^{-1}})\\
    = & \det\nolimits_{\mathfrak{p}_M}(\Ad_x-\Id).
\end{align*}

\noindent\textbf{Claim 2} $K_M=Z_K(\mathfrak{a})$, which is the stabilizer subgroup of $\mathfrak{a}$ in $K$. See Remark~\ref{M-K-Remark} for details. \\
Indeed, if $m\in Z_K(\mathfrak{a})$, then $m\in K$ and $m\in Z_G(\mathfrak{a})$. Since $Z_G(\mathfrak{a})=MA$ (\cite[Proposition~7.82(a)]{KnappBeyond}), this implies that $m\in K\cap(MA)=K\cap M$ (see \cite[Subsection~V.5]{KnappRepTheorySemisimpleGroups} for details). Thus, $m\in K_M$. \\
On the other hand, if $m\in K_M$, then $m\in K\cap M$. This implies that $m\in K$ and $m\in Z_G(\mathfrak{a})$. Thus, $m\in Z_K(\mathfrak{a})$. 

With the two claims, we have finished the proof.

\end{proof}

\begin{remark}
    It is also interesting to study the limit of the higher orbital integrals $\Phi_{P_t,x,t}$ when $x$ is \emph{not} regular (in $M$). We noticed that a similar argument can be used to show that when $P_t$ is minimal parabolic and $x=e$, the family of higher orbital integrals $\Phi_{P_t,e,t}$ converges to a corresponding one on the Cartan motion group. For general cuspidal parabolic subgroups and general group elements, the identification of the limit is open.
\end{remark}

\bibliographystyle{alpha}

\end{document}